\begin{document}

\newcommand{\fig}[4]{
        \begin{figure}[htbp]
        \setlength{\epsfysize}{#2}
        \centerline{\epsfbox{#4}}
        \caption{#3} \label{#1}
        \end{figure}
        }

\addtolength{\parskip}{1ex}

\def\a{\alpha}
\def\b{\beta}
\def\c{\chi}
\def\d{\delta}
\def\D{\Delta}
\def\e{\epsilon}
\def\f{\phi}
\def\F{\Phi}
\def\g{\gamma}
\def\G{\Gamma}
\def\k{\kappa}
\def\K{\Kappa}
\def\z{\zeta}
\def\th{\theta}
\def\Th{\Theta}
\def\l{\lambda}
\def\la{\lambda}
\def\m{\mu}
\def\n{\nu}
\def\p{\pi}
\def\P{\Pi}
\def\r{\rho}
\def\R{\Rho}
\def\s{\sigma}
\def\S{\Sigma}
\def\t{\tau}
\def\om{\omega}
\def\Om{\Omega}
\def\smallo{{\rm o}}
\def\bigo{{\rm O}}
\def\to{\rightarrow}
\def\E{{\bf Exp}}
\def\ex{{\bf Exp}}
\def\cd{{\cal D}}
\def\rme{{\rm e}}
\def\hf{{1\over2}}
\def\R{{\bf  R}}
\def\cala{{\cal A}}
\def\cale{{\cal E}}
\def\calz{{\cal Z}}
\def\cald{{\cal D}}
\def\calp{{\cal P}}
\def\Fscr{{\cal F}}
\def\cc{{\cal C}}
\def\calc{{\cal C}}
\def\calh{{\cal H}}
\def\calv{{\cal V}}
\def\bk{\backslash}

\def\out{{\rm Out}}
\def\temp{{\rm Temp}}
\def\overused{{\rm Overused}}
\def\big{{\rm Big}}
\def\moderate{{\rm Moderate}}
\def\swappable{{\rm Swappable}}
\def\candidate{{\rm Candidate}}
\def\bad{{\rm Bad}}
\def\crit{{\rm Crit}}
\def\col{{\rm Col}}
\def\dist{{\rm dist}}
\def\mod{{\rm mod }}
\def\supp{{\rm supp}}

\newcommand{\Exp}{\mbox{\bf Exp}}
\newcommand{\var}{\mbox{\bf Var}}
\newcommand{\pr}{\mbox{\bf Pr}}

\newtheorem{lemma}{Lemma}
\newtheorem{theorem}[lemma]{Theorem}
\newtheorem{corollary}[lemma]{Corollary}
\newtheorem{claim}[lemma]{Claim}
\newtheorem{fact}[lemma]{Fact}
\newtheorem{proposition}[lemma]{Proposition}
\newtheorem{observation}[lemma]{Observation}
\newtheorem{prob}[lemma]{Problem}
\newtheorem{question}[lemma]{Question}
\newtheorem{conjecture}[lemma]{Conjecture}
\newtheorem{example}[lemma]{Example}
\newenvironment{proof}{{\bf Proof.}}{\hfill\rule{2mm}{2mm}}
\newenvironment{definition}{\noindent{\bf Definition.}}{}
\newtheorem{remarka}[lemma]{Remark}
\newenvironment{remark}{\begin{remarka}\rm}{\end{remarka}}

\newcommand{\limninf}{\lim_{n \rightarrow \infty}}
\newcommand{\proofstart}{{\bf Proof\hspace{2em}}}
\newcommand{\tset}{\mbox{$\cal T$}}
\newcommand{\proofend}{\hspace*{\fill}\mbox{$\Box$}}
\newcommand{\bfm}[1]{\mbox{\boldmath $#1$}}
\newcommand{\reals}{\mbox{\bfm{R}}}
\newcommand{\expect}{\mbox{\bf Exp}}
\newcommand{\he}{\hat{\e}}
\newcommand{\card}[1]{\mbox{$|#1|$}}
\newcommand{\rup}[1]{\mbox{$\lceil{ #1}\rceil$}}
\newcommand{\rdn}[1]{\mbox{$\lfloor{ #1}\rfloor$}}
\newcommand{\ov}[1]{\mbox{$\overline{ #1}$}}
\newcommand{\inv}[1]{\mbox{$1\over #1 $}}

\def\calp{{\cal P}}
\newcommand{\csp}{{CSP_{n,p}(\calp)}}
\newcommand{\cspc}{{CSP_{n,p=c/n^{k-1}}(\calp)}}
\newcommand{\cspp}{{CSP(\calp)}}
\newcommand{\csph}{{\widehat{\CSP}_{n,p}({\cal P})}}
\def\CSP {{CSP}}
\def\Dom {{\rm Dom}}
\def\supp {{\bf supp}}
\def\Prob {{\bf Pr}}
\def\Ex {{\bf Ex}}
\def\Var {{\bf Var}}
\def\thr {{\rm thr }}

\title{{\Large \bf Sharp thresholds for constraint satisfaction problems and homomorphisms}}

\author{
Hamed Hatami {\em and} Michael Molloy\\
Department of Computer Science\\
University of Toronto\\
Toronto, Canada\\
\{hamed,molloy\}@cs.toronto.edu
}
\maketitle

\begin{abstract}
We determine under which conditions certain natural models of random constraint satisfaction problems
have sharp thresholds of satisfiability.  These models  include graph and hypergraph
homomorphism, the $(d,k,t)$-model, and binary constraint satisfaction problems with domain size three.
\end{abstract}


\section{Introduction}
Random 3-SAT and its generalizations have been studied intensively
for the past decade or so (see e.g.\ \cite{abm,
ac6,cs,ef,jsv,nae,cr,dm2,fla,xl1,cm}). One of the most interesting
things about these models, and arguably the main reason that most
people study them, is that many of them exhibit what is called a
{\em sharp threshold of satisfiability}\footnote{Defined formally
below.}, a critical clause-density at which the random problem
suddenly moves from being almost surely\footnotemark[1] satisfiable
to almost surely unsatisfiable.    Most of the work on these
problems is, at least implicitly, an attempt to determine the
precise locations of their thresholds.  At this point, these
locations are known only for a handful of the problems, such as
\cite{nae,cr,dm2,fla,xl1,cm}. Just proving the existence of a sharp
threshold for random 3-SAT was considered a major breakthrough by
Friedgut\cite{ef}.  The vast majority of these generalizations
appear to have sharp thresholds, but there are exceptions which are
said to have coarse thresholds\footnotemark[1].

The ultimate goal of the present line of enquiry is to determine
precisely which of these models have sharp thresholds, but this
appears to be quite difficult; in Section \ref{s3} we show that it
is at least as difficult as determining the location of the
threshold for 3-colourability, something that has been sought after
for more than 50 years (see e.g.\ \cite{er,mmsurv,dm}). A more
fundamental goal is to obtain a better understanding of what can
cause some problems to have coarse thresholds rather than sharp
ones.

Molloy\cite{mm1} and independently Creignou and Daud\'{e}\cite{cd}
introduced a wide family of models for random constraint
satisfaction problems which includes 3-SAT and many of its
generalizations. This permits us to study them under a common
umbrella, rather than one-at-a-time. Molloy determined precisely
which models from this family have any threshold at all (\cite{cd}
provides the same result for those models with
domain\footnotemark[1] size 2). But he left open the much more
important question of which models have sharp thresholds. In this
paper, we begin to address this question.  We answer it for two of
the most natural subfamilies - the so-called
$(d,k,t)$-family\footnotemark[1] (Theorem \ref{tdkt}), and the
family of graph and hypergraph homomorphism problems (Theorem
\ref{thomo}). We also study binary constraint satisfaction problems
with domain size 3.

The standard example of a problem with a coarse threshold is
2-colourability. Here, there is a coarse threshold precisely
because unsatisfiability (i.e. non-2-colourability) can be caused
only by the presence of odd cycles.  Roughly speaking, Friedgut's
theorem\cite{ef} implies that a problem exhibits a coarse
threshold iff unsatisfiability is {\em approximately} equivalent
to having one of a set of unicyclic\footnotemark[1] subproblems.
It is not hard to see that if there are unsatisfiable unicyclic
instances of a problem then that problem exhibits a coarse
threshold (or exhibits no threshold at all). This makes it quite
natural to pose the following rule-of-thumb:

{\bf Hypothesis A:} {\em If a random model from the family in \cite{mm1} is such that:
(a) it exhibits a threshold, and (b) every unicyclic instance is satisfiable,
then that threshold is sharp.}

However, reality is not that simple. \cite{mm1} presents a
counterexample to Hypothesis A; others are presented in this paper.
Nevertheless, the hypothesis holds for certain subfamilies of
models. Creignou and Daud\'{e}\cite{cd} conjectured that Hypothesis
A holds for problems with domain-size two. Special cases of this
conjecture were proven by Istrate\cite{gi} and independently
Creignou and Daud\'{e}\cite{cd2}; the proofs of each paper can be
extended to cover the entire conjecture.  Theorems \ref{tdkt} and
\ref{thomo} in this paper show that Hypothesis A holds for the
$(d,k,t)$-models and for homomorphisms to connected graphs.

In general, coarse thresholds can be caused by
much more subtle and insidious reasons than unsatisfiable unicyclic instances.
In this paper we begin to understand some of these reasons by focusing on the
case where the constraint size is two and the domain size is three (a natural
next step after the well-understood domain-size-two case).  In this paper, we identify
a particular subtle property that must hold whenever Hypothesis A fails (Theorem \ref{t23}).
If we permit either greater domain sizes or greater constraint sizes then this
is no longer true.

\subsection{The random models}
In our setting, the variables of a constraint satisfaction problem (CSP)
all have the same domain of permissable
values, $\{1,...,d\}$, and all constraints will have size $k$,
for some fixed integers $d,k$.
Given a $k$-tuple of variables, $(x_1,...,x_k)$,
a {\em restriction} on $(x_1,...,x_k)$ is a  $k$-tuple of values
$R=(\d_1,...\d_k)$ where  $1\leq\d_i\leq d$ for each $i$.
For each $k$-tuple $(x_1,...,x_k)$, the set of restrictions
on that $k$-tuple is called a {\em constraint}.
The {\em empty constraint} is the constraint which contains
no restrictions. We say that an
assignment of values to the variables of a constraint $C$
{\em satisfies} $C$ if that assignment is not one of the restrictions
in $C$.   An assignment of values to all variables in a CSP {\em satisfies}
that CSP if every constraint is simultaneously satisfied.
A CSP is {\em satisfiable} if it has such a satisfying assignment.

It will be convenient to consider a set of canonical variables
$X_1,...,X_k$ which are used only to describe the ``pattern" of a constraint.
These canonical variables are not variables of the actual CSP.
For any $d,k$ there are $d^k$ possible restrictions and $2^{d^k}$
possible constraints over the $k$ canonical variables. We denote this
set of constraints as $\calc^{d,k}$.
For our random model, one begins by specifying a particular probability
distribution, $\calp$ over $\calc^{d,k}$.  We use $\supp(\calp)$ to
denote the support of $\calp$; i.e. the set of constraints $C$ with
$\calp(C)>0$.
Different choices of $\calp$ give rise to different instances of the
model.

We now define our random models.  The ``$G_{n,M}$'' model,
where the number of constraints is fixed to be $M$, is the most common.
But in this paper,
it will be much more convenient to focus on the ``$G_{n,p}$''
model where each $k$-tuple of variables is chosen independently with probability
$p=c/n^{k-1}$ to receive a constraint.  The two models are, in most respects,
equivalent when $M=(c/k!)n$.  In particular, it is straightforward to show that one
exhibits a sharp threshold iff the other does.

{\bf The $\csp$ Model:} Specify $n,p$ and $\calp$ (typically $p=c/n^{k-1}$
for some constant $c$;
note that $\calp$ implicitly specifies $d,k$).
First choose a random $k$-uniform
hypergraph on $n$ variables where each of the ${n\choose k}$ potential
hyperedges is selected with probability $p$.
Next, for each hyperedge $e$, we choose a constraint on the
$k$ variables of $e$ as follows: we take a random permutation from
the $k$ variables onto $\{X_1,...,X_k\}$ and then we select a random
constraint  according to $\calp$ and map it onto
the $k$ variables.

A property holds {\em almost surely} (a.s) if the limit as
$n\rightarrow\infty$ of it holding is $1$. In \cite{cd,mm1} it was
shown that for every $\calp$, either: (i) $\csp$ is a.s. satisfiable
for every $c\geq0$, (ii) $\csp$ is a.s. unsatisfiable for every
$c>0$, or (iii) there is some $c_1<c_2$ such that $\csp$ is a.s.
satisfiable for every $0\leq c\leq c_1$ and $\csp$ is a.s.
unsatisfiable for every $c>c_2$.  We say that $\csp$ has a {\em
sharp threshold of satisfiability} if there is some positive-valued
function $c(n)=\Theta(1)$ such that for every $\e>0$, if
$p=(1-\e)c(n)/n^{k-1}$ then $\csp$ is a.s. satisfiable and if
$p=(1+\e)c(n)/n^{k-1}$ then $\csp$ is a.s. unsatisfiable.  This is
often abbreviated to just {\em sharp threshold}.  We say that $\csp$
has a {\em coarse threshold} if for all $c$ in some interval
$c_1(n)<c<c_2(n)$,  $\csp$ is neither a.s. satisfiable nor a.s.
unsatisfiable. It is easy to see that every $\calp$ satisfying case
(iii) above must have either a coarse threshold or a sharp
threshold.

Each $k$-tuple of vertices can have at most one constraint in $\csp$.  When applying
Friedgut's theorem, it will be convenient to relax this condition, and allow
$k$-tuples to possibly receive multiple constraints.  Thus up to
$k!\times|\supp(\calp)|$ constraints can appear
on a $k$-tuple of variables.

{\bf The $\csph$ Model:} Specify $n,p$ and $\calp$. For each of the $n(n-1)...(n-k+1)$
{\em ordered} $k$-tuples of variables and each constraint $C\in\supp(\calp)$,
we assign $C$ to the ordered $k$-tuple with probability $\calp(C)\times p/k!$.

Note that the expected total number of constraints is the same under each model.
Furthermore, it is easy to calculate that the probability of at least one
$k$-tuple receiving more than one constraint in $\csph$ is for $k\geq 3$, $o(1)$
and for $k=2$, an absolute constant $0<\a< 1$.  It follows that if a property
holds a.s. in $\csph$ then it holds a.s. in $\csp$.  As a corollary, we have:

\begin{lemma}\label{l1}
If $\csph$ has a sharp threshold then so does $\csp$.
The reverse is true for $k\geq3$.
\end{lemma}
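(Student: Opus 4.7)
I would prove both directions via a single coupling: condition $\csph$ on the event $\mathcal{N}$ that no unordered $k$-tuple receives more than one constraint. In $\csph$ the $k!\,|\supp(\calp)|$ ``slots'' on each unordered $k$-tuple are independent Bernoullis with probabilities $\calp(C)\,p/k!$, and $\mathcal{N}$ factors over $k$-tuples, so a direct calculation yields the key identity: \emph{conditional on $\mathcal{N}$, $\csph$ is distributed as $\csp$ with $p$ replaced by $q(p)$}, where $q(p)$ is the per-tuple probability of exactly one active slot. A Poisson-type expansion gives $q(p)=p(1+o(1))$ when $p=\Theta(n^{-(k-1)})$, and conditional on a tuple having exactly one constraint, that constraint is a uniform permutation applied to a $\calp$-random canonical constraint, matching the $\csp$ construction exactly. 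The paper records $\Prob(\mathcal{N})\geq 1-\alpha>0$ for $k=2$ and $\Prob(\mathcal{N})=1-o(1)$ for $k\geq 3$.

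\textbf{Forward direction.} Suppose $\csph$ has sharp threshold $c^*(n)$; fix $\e>0$ and set $p^\pm=(1\pm\e/2)c^*(n)/n^{k-1}$, so $\csph$ is a.s.\ satisfiable at $p^-$ and a.s.\ unsatisfiable at $p^+$. Because $\Prob(A\mid\mathcal{N})\geq 1-\Prob(A^c)/\Prob(\mathcal{N})$ and $\Prob(\mathcal{N})$ is bounded below by the positive constant $1-\alpha$, these a.s.\ properties survive the conditioning, so by the coupling $\csp$ at density $q(p^\pm)$ is a.s.\ satisfiable / a.s.\ unsatisfiable respectively. Since $q(p)=p(1+o(1))$, for $n$ large $q(p^-)\geq(1-\e)c^*(n)/n^{k-1}$ and $q(p^+)\leq(1+\e)c^*(n)/n^{k-1}$, so monotonicity of (un)satisfiability in the number of constraints extends the behaviour to the densities $(1\pm\e)c^*(n)/n^{k-1}$, giving $\csp$ a sharp threshold at $c^*(n)$.

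\textbf{Reverse direction for $k\geq 3$.} Now $\Prob(\mathcal{N})=1-o(1)$, so the conditioning is asymptotically invisible: assume $\csp$ is sharp at $c(n)$, take $p=(1\pm\e)c(n)/n^{k-1}$, and note that $q(p)=p(1+o(1))$ sits inside $(1\pm\e/2)c(n)/n^{k-1}$ for large $n$. Hence $\csp$ at density $q(p)$ is a.s.\ sat/unsat (by the sharp threshold of $\csp$ applied with $\e/2$, together with monotonicity), and therefore by the coupling so is $\csph$ at $p$, giving $\csph$ a sharp threshold at $c(n)$.

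\textbf{Main obstacle.} The one genuinely delicate step is the $k=2$ forward case, where $\Prob(\mathcal{N})$ is only a positive constant $1-\alpha$; conditioning on $\mathcal{N}$ can then inflate probabilities by up to $1/(1-\alpha)$, but this is harmless since the argument only uses that $o(1)/(1-\alpha)=o(1)$. The reverse direction is restricted to $k\geq 3$ precisely because of this asymmetry: if $\supp(\calp)$ contains a pair of constraints whose union is an unsatisfiable constraint on a 2-tuple, then $\csph$ carries such a bad pair on some 2-tuple with constant positive probability for every $c>0$, so $\csph$ cannot be a.s.\ satisfiable anywhere and has no sharp threshold regardless of $\csp$'s behaviour.
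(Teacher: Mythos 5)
Your proposal is correct and follows essentially the same route as the paper: the paper's (very terse) argument is exactly that the probability of some $k$-tuple receiving more than one constraint is $o(1)$ for $k\geq3$ and a constant $\alpha<1$ for $k=2$, so that a.s.\ statements transfer between $\csph$ and $\csp$; your conditional coupling on the no-collision event, the density adjustment $q(p)=p(1+o(1))$, and the monotonicity step just make that transfer explicit. The added care about conditioning inflating probabilities by at most $1/(1-\alpha)$ and the $k=2$ counterexample for the reverse direction are consistent with the paper's remarks.
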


So for the remainder of the paper, whenever we wish to prove that $\csp$ has a sharp
threshold, we will work in the $\csph$ model.

We often focus on the {\em constraint hypergraph} of a CSP; i.e. the hypergraph
whose vertices are the variables and whose edges are the tuples of variables
that have constraints.  A {\em tree-CSP} is a CSP whose constraint
hypergraph is a hypertree. A CSP is {\em unicyclic} if its constraint
hypergraph is unicylic; i.e. has exactly one cycle. (Hypertree and cycle are defined below).

$F_1$ is said to be a {\em sub-CSP} of $F_2$ if every variable of
$F_1$ is a variable of $F_2$ and every constraint of $F_1$ is a constraint of $F_2$.

We close this subsection with some hypergraph definitions.
A hypergraph consists of a set of vertices and a set of {\em hyperedges},
where each hyperedge is a collection of vertices.  If every hyperedge
has size exactly $k$ then the hypergraph is {\em $k$-uniform}.  In a {\em simple}
hypergraph, no vertex appears twice in any one hyperedge, and no two edges
are identical.  So, for example, the constraint hypergraph of $\csp$ is
simple, but the constraint hypergraph of $\csph$ may have multiple edges.
Neither model permits multiple copies of a vertex in a single edge,
but such edges are possible when we discuss hypergraph homomorphism problems.
The edge $(v,v,...,v)$ is called a {\em loop}.

A walk $P$ of length $r$ is a sequence of $r$ hyperedges and $r+1$
vertices $(v_0, e_1,v_1, e_2, v_2 \ldots, e_r, v_r)$ such that
$e_i$ contains both $v_{i-1}$ and $v_i$.  A walk  is a {\em path}
if the $v_i$ are distinct.  A walk is a {\em cycle} of size $r$ if
for $i=1,\ldots,r$ the $v_i$ and $e_i$ are distinct, and
$v_0=v_r$. The {\em distance} from a vertex $u$ to a vertex $v$ is
the minimum $r$ such that there exists a walk of length $r$,
$(v,e_1,v_1,\ldots,e_r,u)$; the distance of a vertex from itself
is defined to be $0$. The distance from a vertex $v$ to a set of
vertices is the minimum distance from $v$ to any vertex in the
set. A hypergraph is a {\em hypertree} if it has no cycles and it
is connected.

By {\em contracting}
two vertices $u$ and $v$ into a new vertex $w$, we mean (i)
adding a new vertex $w$ to the set of the vertices, (ii) replacing
$u$ and $v$ in every hyperedge by $w$, and (iii) removing $u$ and $v$.
Note that this may result in a hyperedge containing multiple copies
of $w$.

\subsection{Two special families}\label{section:hom}
The ultimate goal of this research is to characterize all
distributions ${\cal P}$ for which $\csp$
exhibits a sharp threshold. However, in section \ref{s3}, we will
show that this is very difficult by proving the following:
\begin{observation}\label{thard}
If one can determine which distributions ${\cal P}$ yield sharp thresholds
for $\csp$ then one can determine the locations of each of those thresholds.
\end{observation}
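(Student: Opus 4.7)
The strategy is to reduce location to sharpness: given an oracle that, for any $\calp$, decides whether $\csp$ has a sharp threshold, I would use it to pinpoint $c^*(\calp)$ for any $\calp$ with a sharp threshold. The plan is to construct a one-parameter family of distributions whose sharp-vs-coarse dichotomy, as reported by the oracle, encodes a comparison of $c^*(\calp)$ against a tunable benchmark.

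First I would fix a reference distribution ${\cal Q}$, with the same $k$ and, after innocuous padding of the domain, the same $d$ as $\calp$, which is known to become non-a.s.-satisfiable at an explicit constant $c_{\cal Q}$ and to have a coarse threshold driven by short unicyclic obstructions. The canonical example is the $2$-colourability constraint, whose non-satisfiability is governed by odd cycles. For each $\lambda\in(0,1)$ I would define the mixture ${\cal R}_\lambda$ which, independently for each ordered $k$-tuple of variables sampled in $\csph$, draws its constraint from $\calp$ with probability $\lambda$ and from ${\cal Q}$ with probability $1-\lambda$. Under ${\cal R}_\lambda$ at density $p=c/n^{k-1}$, the $\calp$-sub-CSP appears at effective density $\lambda c$ and the ${\cal Q}$-sub-CSP at effective density $(1-\lambda)c$.

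The core claim I would establish is that ${\cal R}_\lambda$ has a sharp threshold precisely when $c^*(\calp)/\lambda<c_{\cal Q}/(1-\lambda)$. For $\lambda$ large, as $c$ crosses $c^*(\calp)/\lambda$ the $\calp$-component becomes a.s.\ unsatisfiable while ${\cal Q}$ is still a.s.\ satisfiable, so ${\cal R}_\lambda$ jumps a.s.\ from satisfiable to unsatisfiable and inherits the sharpness of $\calp$. For $\lambda$ small, ${\cal Q}$ enters its coarse window at $c=c_{\cal Q}/(1-\lambda)$ before $\calp$ has reached its threshold, the short-cycle obstructions of ${\cal Q}$ persist inside ${\cal R}_\lambda$, and the mixture inherits coarseness over the interval $(c_{\cal Q}/(1-\lambda),c^*(\calp)/\lambda)$. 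The transition occurs at the unique $\lambda^*$ solving $c^*(\calp)/\lambda^*=c_{\cal Q}/(1-\lambda^*)$; bisection on $\lambda$, invoking the oracle at each step, locates $\lambda^*$ to arbitrary precision and hence yields $c^*(\calp)=c_{\cal Q}\lambda^*/(1-\lambda^*)$.

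The hard part will be making the dichotomy rigorous, because the two sub-CSPs share the same variable set and are not independent. Two points in particular must be verified: that below $c^*(\calp)/\lambda$ in the $\calp$-dominated regime the $\calp$- and ${\cal Q}$-pieces remain \emph{jointly} a.s.\ satisfiable, and that in the ${\cal Q}$-dominated regime the random $\calp$-constraints do not eliminate enough of the short-cycle obstructions to convert ${\cal Q}$'s coarseness into sharpness of ${\cal R}_\lambda$. Both obstacles should ultimately reduce to an approximate independence statement at the critical density, exploiting the sparsity of random hyperedges and a direct application of Friedgut's characterisation to ${\cal R}_\lambda$ itself rather than to $\calp$ and ${\cal Q}$ separately.
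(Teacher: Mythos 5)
Your overall reduction scheme (build a one-parameter family, query the sharp/coarse oracle, and locate the parameter value where the answer flips) is the same shape as the paper's, but the family you propose does not deliver the dichotomy your bisection needs, and the reason it fails is precisely the idea the paper's construction is built around. In your mixture ${\cal R}_\lambda$ the two constraint types live on a common domain, so a single assignment must satisfy the $\calp$-constraints and the ${\cal Q}$-constraints simultaneously. With the canonical ${\cal Q}$ you suggest (a $2$-colourability-type constraint whose coarseness comes from odd cycles), an odd cycle of ${\cal Q}$-constraints is an unsatisfiable unicyclic instance available in the support of ${\cal R}_\lambda$ for every $\lambda\in(0,1)$, and at any constant density it occurs with probability bounded away from $0$ and $1$. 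Hence ${\cal R}_\lambda$ is never a.s.\ satisfiable at constant density and never has a sharp threshold, for any $\lambda$: the oracle answers ``coarse'' identically in $\lambda$ and your bisection extracts nothing about $c^*(\calp)$. In particular the step ``for $\lambda$ large, ${\cal R}_\lambda$ inherits the sharpness of $\calp$'' is wrong — below $c^*(\calp)/\lambda$ the mixture is still unsatisfiable with constant probability because of the ${\cal Q}$-obstructions, and this is a structural failure, not the ``approximate independence'' issue you flag as the hard part. If instead you pick a ${\cal Q}$ without unsatisfiable unicyclic instances, you lose both the explicitly known $c_{\cal Q}$ and the coarseness mechanism your claimed equivalence relies on, and the equivalence itself remains unproved.

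The missing ingredient is the paper's (far from ``innocuous'') domain padding. The paper enlarges the domain by two extra values and makes \emph{every} constraint force the endpoints of an edge to lie either both in the original value block $\{1,\dots,d\}$ or both in the new block $\{d+1,d+2\}$; the auxiliary constraint $C^*$ (probability $q$) acts as a $2$-colouring constraint only inside the new block and imposes nothing inside the original block, while each $C_i'$ acts as the original $C_i$ only inside the original block. Because of this block structure, an odd cycle of $C^*$-constraints is harmless unless its component is \emph{forced} into the new block, and such forcing happens only through a global mechanism: every connected component must choose a block, and the giant component's choice is dictated by whether the $C^*$-subgraph is supercritical ($cq>1$, forcing the giant into the original block, where satisfiability becomes exactly the $\calp$-problem at rescaled density) or by whether the $\calp$-part at rescaled density $c(1-q)$ is a.s.\ unsatisfiable (forcing the giant into the new block, where satisfiability hinges on odd $C^*$-cycles and is coarse). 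Thus sharpness of the $q$-family is governed by whether $\calp$ at density $c(q)=(1-q)/q$ is a.s.\ satisfiable, which is exactly how the sharp/coarse oracle pins down the location of $\calp$'s threshold (done first for $3$-colourability, then in general). Letting an assignment ``switch off'' one constraint family on an entire component is what your same-domain mixture cannot do, and without it the comparison $c^*(\calp)/\lambda$ versus $c_{\cal Q}/(1-\lambda)$ is not what the oracle sees.
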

Determining the locations of some of these thresholds,
eg the 3-SAT threshold or the 3-colourability threshold, is notoriously difficult
(see \cite{mmsurv} for a survey on those two thresholds).
So Observation \ref{thard}
suggests that we should set our sights lower and focus on some important
classes of distributions.


Perhaps the most natural choice for $\calp$ is the distribution
obtained by selecting each of the $d^k$ possible restrictions
independently with probability $q$ for some constant $q$.
However, as noted in \cite{akk}, every such
choice of $\calp$ yields a model that is a.s. unsatisfiable for
any non-trivial choice of $p$.  So this is a rather uninteresting
family of models, particularly as far as the study of thresholds
goes.

The next most natural choice for $\calp$ is to fix $t$, the number of restrictions
per clause, and to make every constraint with exactly $t$ restrictions equally likely.
(Note that for $d=2,t=1$ this yields random $k$-SAT.)
This is often called the $(d,k,t)$-model and has received a great deal of study,
both from a theoretical perspective \cite{mit,ms} and from experimentalists
(see \cite{gmp} for a survey of many such studies).  In \cite{akk} it is shown
that when $t\geq d^{k-1}$,  this model is problematic in the same way as
the previously mentioned one, as it is a.s. unsatisfiable even for values
of $p=o(1/n^{k-1})$ (i.e. when the number of constraints is $o(n)$).
However, it was proven in \cite{gmp} that for every $1\leq t<d^{k-1}$, the
$(d,k,t)$-model does not have that problem.  One of the main contributions of
this paper is to show that for this case, the model  exhibits a sharp threshold:

\begin{theorem} \label{tdkt} For every $d,k\geq2$ and every $1\leq t< d^{k-1}$,
the $(d,k,t)$-model has a sharp threshold.
\end{theorem}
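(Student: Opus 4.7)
The plan is to apply Friedgut's theorem for CSPs (in the $\csph$ model, equivalent to $\csp$ by Lemma~\ref{l1}) and reduce the sharpness question to showing that no small ``booster'' sub-CSP can drive the satisfaction probability down by a fixed positive amount. The main weapon against such a booster is the per-variable symmetry of the $(d,k,t)$-distribution on constraints.

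First, the preliminaries. The trichotomy of \cite{mm1} places the $(d,k,t)$-model with $1\le t<d^{k-1}$ in case (iii), so some threshold $c(n)=\Theta(1)$ exists. Moreover, every unicyclic instance is satisfiable: pigeonhole on $t<d^{k-1}$ gives, in each single constraint, that fixing any $k-1$ of the $k$ coordinates leaves at least one permitted value for the remaining coordinate; this local flexibility is enough to propagate a tree-satisfying assignment around the unique cycle and close it via a short case analysis.

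Now assume for contradiction that the threshold is coarse. The CSP-form of Friedgut's theorem, as used for domain size two in \cite{gi,cd2}, then produces constants $\alpha,\beta>0$, a density $c$ in the coarse window with $\Prob[\csph\text{ sat}]\ge\beta$, and a fixed finite satisfiable ``booster'' $F^*$ on $m$ variables such that, for a uniformly random injective embedding $\phi:V(F^*)\hookrightarrow[n]$,
\[
\Prob\bigl[\csph\text{ is satisfiable, but }\csph\cup\phi(F^*)\text{ is not}\bigr]\;\ge\;\alpha.
\]
The key leverage is that $\calp$ is uniform on $t$-subsets of $\{1,\ldots,d\}^k$ and hence invariant under the action of $(S_d)^n$ that independently permutes the $d$ domain values at each variable. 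Consequently, conditioning on $\csph$ being satisfiable and drawing $\sigma$ uniformly from its satisfying assignments, the joint distribution of $\sigma$ restricted to any fixed bounded vertex set is uniform on $\{1,\ldots,d\}$-tuples. Applied to $\phi(V(F^*))$ and the positive number of satisfying patterns of $F^*$, this yields
\[
\Prob\bigl[\sigma\text{ satisfies }\phi(F^*)\mid\csph\text{ sat}\bigr]\;\ge\;d^{-m},
\]
and hence $\Prob[\csph\cup\phi(F^*)\text{ sat}]\ge d^{-m}\Prob[\csph\text{ sat}]$.

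The main obstacle, where the bulk of the technical work goes, is upgrading this single-copy bound to contradict the $\alpha$-boost: alone it is consistent with a small positive $\alpha$. Following the domain-size-two arguments of \cite{gi,cd2}, the natural route is to iterate: take $M=M(\alpha,F^*)$ uniformly random embeddings $\phi_1,\ldots,\phi_M$ of $F^*$ that are (with high probability) vertex-disjoint and whose constraint-neighborhoods in $\csph$ essentially do not interact. Near-independence of the ``$\sigma$ satisfies $\phi_i(F^*)$'' events under the symmetrization forces the joint survival probability to be at least roughly $d^{-Mm}$, while iterating Friedgut's boost $M$ times drives it below $(1-\alpha)^M\Prob[\csph\text{ sat}]$, and choosing $M$ large yields the contradiction. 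The hardest step is controlling the mild spatial correlation among the $M$ embeddings and arranging a Friedgut-produced $F^*$ for which this tensorization is legitimate; this likely requires a Bourgain-style refinement that delivers a booster whose satisfying-pattern fraction stays above the Friedgut decay rate at the chosen $M$.
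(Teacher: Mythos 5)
Your plan breaks down at the step where the contradiction is supposed to appear, and the direction of the inequalities makes it unfixable in this form. Your symmetry observation is correct and genuinely uses the $(d,k,t)$-structure: the distribution is invariant under independent permutations of the domain at each variable, so a uniformly random satisfying assignment restricted to a fixed bounded vertex set is uniform, giving $\Prob[\csph\cup\phi(F^*)\mbox{ sat}]\ge d^{-m}\Prob[\csph\mbox{ sat}]$. But this is perfectly consistent with Friedgut's conclusions, and the proposed iteration makes things worse, not better: with $M$ disjoint copies the symmetry lower bound is $d^{-Mm}$, which decays \emph{faster} than $(1-\alpha)^M$ (indeed faster than $2^{-M}$), so no choice of $M$ forces a contradiction; moreover the $(1-\alpha)^M$ upper bound itself is unjustified, since the boost statement (Corollary \ref{toolcsp}(c)-type) applies to the plain random instance, not to the instance already augmented by earlier copies. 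The "Bourgain-style refinement delivering a booster whose satisfying-pattern fraction stays above the Friedgut decay rate" is not an available tool; it is a restatement of the missing step. What any working argument must do — and what you never engage with — is show that the booster's effect can be replicated by a structure that appears after a negligible density increase, so as to contradict part (b) of the coarse-threshold characterization (that increasing $p$ to $(1+\epsilon)p$ leaves the unsatisfiability probability below $1-2\alpha$).

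The paper's proof does exactly this, exploiting the other defining feature of the $(d,k,t)$-model: \emph{every} constraint with exactly $t$ restrictions lies in $\supp(\calp)$. Since the unicyclic booster $M$ is satisfiable (by Gent et al.; note your pigeonhole sketch is false as stated — for $d=2,k=3,t=3<d^{k-1}$ a constraint with restrictions $(1,1,1),(1,1,2)$ plus one more leaves no value for the third coordinate after fixing the first two), one may instead force a satisfying assignment $A$ of $M$ onto random variables, which boosts unsatisfiability at least as much. Each such forced value turns the incident constraints into $(k-1)$-ary constraints with at most $t$ restrictions, and after monotone modifications one concludes that placing a constant number of \emph{arbitrary} $t$-restriction $(k-1)$-ary constraints on random tuples boosts unsatisfiability by $2\alpha-o(1)$. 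These are then packaged into a star-shaped hypertree $T$ carrying all ${d^k\choose t}$ constraints, so that whichever of the $d$ values the center takes, a "bad" collection is enforced; adding $T$ yields unsatisfiability with probability at least $(2\alpha-o(1))^d$, and adding constantly many copies of $T$ — available a.s. at density increase $o(n^{1-k})$ by Lemma \ref{tree} — drives unsatisfiability above $1-\alpha$, contradicting Corollary \ref{toolcsp}(b). Your write-up contains neither this tree-simulation mechanism nor any substitute for it, so the proof as proposed does not go through.
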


Note that this generalizes the well-known result that $k$-SAT has a sharp
threshold (\cite{cr,ag} for $k=2$; \cite{ef} for $k\geq3$), as can
be seen by setting $d=2,t=1$.

From a different perspective, it is quite natural to consider the case where
every constraint is identical, i.e. $|\supp(\calp)|=1$.  It is not hard
to see that every such problem
is equivalent to a hypergraph homomorphism problem, as defined below:

For two $k$-uniform hypergraphs, $G,H$, a {\em homomorphism} from
$G$ to $H$ is a mapping $h$ from $V(G)$ to $V(H)$  such that for
each edge $(v_1,v_2,\ldots,v_k)$ of $G$,
$(h(v_1),h(v_2),\ldots,h(v_k))$ is an edge of $H$. We say that $G$
is {\em homomorphic} to $H$, if there exists such a homomorphism.
When $k=2$ and $H$ is the complete graph with no loops, we are
simply asking whether $G$ has a $|H|$-colouring. Homomorphisms are
an important generalization of graph colouring (see e.g. \cite{hn}).
They are often also referred to as $H$-colourings (e.g.
\cite{hn2,gkp}).

Suppose that $H$ is a fixed undirected $k$-uniform hypergraph, and $G$ is a random
$k$-uniform hypergraph on $n$ vertices where each of the ${n\choose k}$
potential hyperedges is selected with probability $p$.  Set $d$ to
be equal to the number of vertices in $H$ and define a constraint
$C$ with domain size $d$ and constraint size $k$ by saying that
$C$ permits $X_1=\d_1,...,X_k=\d_k$ iff $(\d_1,...,\d_k)$ is a
hyperedge of $H$.  Treat each vertex of $G$ as a variable with
domain $\{1,..,d\}$ and assign $C$ to each hyperedge of $G$. We
call this the {\em $H$-homomorphism problem}.

Thus we have an instance of $\csp$ where $C$ is the only
constraint in $\supp(\calp)$ and furthermore $C$ is symmetric
under permutations of the canonical variables; in other words, all
constraints are identical even under permutations of variables. It
is easy to see that every such $\calp$ corresponds to a
homomorphism problem; just take $H$ to be the hypergraph where
$(\d_1,...,\d_k)$ is a hyperedge iff $C$ permits
$X_1=\d_1,...,X_k=\d_k$.  Note that here a hyperedge in $H$ may
contain multiple copies of a vertex.

Thus, these $H$-homomorphism problems are not only important
as a fundamental graph problem, but also because they form a very natural subclass of our
family of random CSP models.
In this paper, we prove that Hypothesis A holds for every connected undirected $H$.

It is easy to see that if $H$ has a loop $(\d,\d,...,\d)$
then every hypergraph  is trivially homomorphic to $H$
(just map every vertex to $\d$); so the $H$-homomorphism problem has no
threshold at all.  The other trivial case is where $H$ has
no hyperedges at all and so no non-trivial hypergraph
has an $H$-homomorphism.

\begin{lemma} \label{cycle-hom}Suppose that $H$ is a nontrivial $k$-uniform hypergraph with
no loops. We have the following:

\begin{enumerate}
\item[(a)] For $k\geq 3$, every unicylic $k$-uniform hypergraph is homomorphic to a
single hyperedge, and hence to $H$.
\item[(b)] For $k=2$: if the triangle is homomorphic to $H$, then so is
every unicyclic graph; and the triangle is homomorphic to $H$ iff
$H$ contains a triangle.
\end{enumerate}
\end{lemma}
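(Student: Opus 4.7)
The plan is to handle (a) and (b) separately, in each case by factoring the desired homomorphism $G\to H$ through a small universal target that depends only on $k$.

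For (a), I would pick any hyperedge $e=(\delta_1,\ldots,\delta_k)$ of $H$ (which exists since $H$ is nontrivial) and let $S_k$ denote the single-edge hypergraph on $k$ distinct vertices $\{1,\ldots,k\}$ with unique edge $(1,2,\ldots,k)$. The map $i\mapsto \delta_i$ is a homomorphism $S_k\to H$ (and need not be injective), so it suffices to exhibit a homomorphism $h:G\to S_k$, equivalently a ``rainbow'' labelling of $G$ in which every hyperedge uses each of the $k$ colours exactly once. I would build $h$ in two stages. First, on the unique cycle $(u_0,e_1,u_1,\ldots,e_r,u_0)$: the rainbow condition forces $h(u_{i-1})\neq h(u_i)$ for all $i$, so the restriction of $h$ to $u_0,\ldots,u_{r-1}$ must be a proper $k$-colouring of the graph cycle $C_r$, which exists because $\chi(C_r)\leq 3\leq k$; having fixed these colours, the $k-2$ non-cycle vertices of each cycle edge $e_i$ receive the $k-2$ missing colours bijectively. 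Second, the rest of $G$ consists of hypertrees attached to the cycle, which I would colour by a breadth-first traversal: in a hypertree each newly discovered edge contains exactly one already-coloured vertex, so its $k-1$ new vertices absorb the $k-1$ missing colours bijectively.

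For (b), the ``iff'' is immediate: if $h:K_3\to H$ is a homomorphism then $h(a)h(b)$, $h(b)h(c)$ and $h(a)h(c)$ are edges of $H$, and the three images are pairwise distinct (otherwise $H$ would contain a loop), so they span a triangle in $H$; the converse is trivial. For the remaining implication, a unicyclic graph $G$ has chromatic number at most $3$ (if its unique cycle has even length then $G$ is bipartite, and otherwise one $3$-colours the odd cycle and extends over the attached trees), and any proper $3$-colouring is a homomorphism $G\to K_3$, which composed with the hypothesised $K_3\to H$ yields $G\to H$.

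The main point requiring care is the structural claim used throughout (a): in a unicyclic $k$-uniform hypergraph, non-adjacent cycle edges share no vertex outside the cycle vertices, each hypertree hanging off the cycle meets it in a single vertex, and the BFS through a hypertree always reaches a new edge through exactly one already-visited vertex. Any violation of these would combine with the existing structure to produce a second independent cycle, contradicting unicyclicity, so the entire argument then reduces to the direct colour-counting sketched above.
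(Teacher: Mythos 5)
Your proposal is correct and follows essentially the same route as the paper: for $k\geq 3$ it maps the unique cycle to a single hyperedge via a proper $\leq 3$-colouring of the cycle vertices (the paper uses the explicit alternating pattern $w_0,w_1,\ldots$ with $w_2$ at the end), fills in the remaining $k-2$ vertices of each cycle edge with the missing colours, and extends over the attached hypertrees; for $k=2$ it factors a $3$-colouring of the unicyclic graph through the triangle and uses looplessness for the ``only if'' direction. The extra structural remarks about unicyclicity just make explicit what the paper leaves as ``easy to see.''
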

\begin{proof}
To prove part (1), let $(v_0, e_1,v_1, e_2, v_2 \ldots, e_r, v_0)$
be the unique cycle of the hypergraph, and let
$(w_0,\ldots,w_{k-1})$ be a single hyperedge. Define
$h(v_i)=w_{(i\ \mod \ 2)}$, for every $0 \le i \le r-2$ and
$h(v_{r-1})=w_{2}$. It is easy to see that one can extend $h$ to a
homomorphism from the unicyclic hypergraph to $H$.

Part (2) easily follows from the easy and well-known fact that
every cycle is homomorphic to the triangle, and the triangle is
not homomorphic to any cycle of size greater that $3$.
\end{proof}

From Lemma~\ref{cycle-hom} we conclude that proving that
Hypothesis A holds whenever $H$ is connected and undirected is
equivalent to proving:

\begin{theorem}\label{thomo} If $H$ is a connected undirected loopless $k$-uniform hypergraph
with at least one edge,
then the $H$-homomorphism problem has a sharp threshold iff (a) $k\geq 3$
or (b) $k=2$ and $H$ contains a triangle.
\end{theorem}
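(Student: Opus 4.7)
The plan is to apply Friedgut's sharp threshold theorem in the $\csph$ model; by Lemma~\ref{l1}, a sharp threshold there transfers to $\csp$. Both directions of the theorem will reduce to the statement: \emph{the $H$-homomorphism problem has a sharp threshold if and only if no unicyclic $k$-uniform hypergraph fails to be $H$-homomorphic}. Once this is established, Lemma~\ref{cycle-hom} immediately delivers the stated characterization, since it tells us exactly that all unicyclic $k$-uniform hypergraphs are homomorphic to $H$ precisely when either $k\geq 3$, or $k=2$ and $H$ contains a triangle.

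For the ``only if'' direction, suppose $k=2$ and $H$ is triangle-free. Then by Lemma~\ref{cycle-hom}(b) the triangle itself is an unsatisfiable unicyclic instance of the $H$-homomorphism problem. I would then invoke the standard argument sketched just before Hypothesis A: a positive linear number of pairwise-disjoint triangles appears a.s.\ in $\csph$ at every density $c/n^{k-1}$ with $c$ above any fixed positive constant, each independently producing unsatisfiability with constant probability, so satisfiability cannot decay from $1-o(1)$ to $o(1)$ over a window of size $o(c)$. Since the $H$-homomorphism problem is easily seen to possess \emph{some} threshold (the empty hypergraph is trivially homomorphic to $H$, while a random hypergraph of large constant average degree a.s.\ has chromatic number larger than $|V(H)|$), this threshold must be coarse.

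For the ``if'' direction, assume $k\geq 3$ or $k=2$ with $H$ containing a triangle, so by Lemma~\ref{cycle-hom} every unicyclic $k$-uniform hypergraph is homomorphic to $H$ and hence no unicyclic sub-CSP is unsatisfiable. Suppose for contradiction that the threshold is coarse: then there is a sequence $p=c(n)/n^{k-1}$ with the satisfiability probability of $\csph$ bounded strictly away from $0$ and $1$, and a small multiplicative increase of $p$ does not push this probability to $o(1)$. Friedgut's theorem, applied in the symmetric $\csph$ setting, then yields a bounded-size ``booster'' sub-CSP $B$ with the following property: if $F$ is a uniformly random element of $\csph$ at density $p$ and $B$ is pasted onto a uniformly random tuple of variables (fresh or old), then with probability bounded away from zero $F$ is satisfiable but $F\cup B$ is not, and at least one constraint of $B$ is essential for this failure.

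The main work, and the principal obstacle, is to extract from such a $B$ an unsatisfiable unicyclic sub-CSP, which will contradict Lemma~\ref{cycle-hom}. The strategy is to exploit the local weak convergence of $\csph$ to a Poisson hypertree: a.s.\ the $O(1)$-neighbourhood of the attachment variables in $F$ is a hypertree. Because hypertree $H$-homomorphism problems extend any partial assignment that is locally consistent, a uniformly random satisfying assignment of $F$ induces a distribution on the attachment variables that depends only on this local hypertree structure and on $H$, and in particular is invariant under the symmetries of $H$. Using this, I would iteratively prune $B$, removing at each step a constraint whose deletion preserves the booster property, halting when the sub-CSP becomes unicyclic. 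The symmetry of the $H$-homomorphism constraint under permutation of its variables is what allows the averaging required at each pruning step; carrying out this pruning rigorously -- that is, certifying that removing a suitable constraint of $B$ keeps a positive fraction of booster-triggering extensions alive -- is the technical heart of the argument and is where the bulk of the calculation lies.
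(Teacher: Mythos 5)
Your ``if'' direction contains a fatal conceptual gap. You propose to apply Friedgut's theorem to obtain a booster $B$ and then ``iteratively prune $B$\ldots halting when the sub-CSP becomes unicyclic'' so as to extract an \emph{unsatisfiable} unicyclic sub-CSP contradicting Lemma~\ref{cycle-hom}. This cannot work. At density $p=\Theta(n^{1-k})$, Corollary~\ref{toolcsp} already delivers a booster $M$ that \emph{is} unicyclic, and by Lemma~\ref{cycle-hom} every unicyclic instance in this regime is homomorphic to $H$, i.e.\ satisfiable; so no pruning can ever produce the unsatisfiable unicyclic witness you are after. The whole difficulty of Hypothesis~A (see Example~\ref{ed3} and the counterexamples discussed in the introduction) is precisely that a \emph{satisfiable} unicyclic booster can still raise the probability of unsatisfiability by interacting with the bulk of the random instance, so the contradiction must be reached differently. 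The paper's route is: show that the boost supplied by $M$ is dominated by the boost supplied by a fixed bounded hypertree $T$. For $k\geq 3$ one maps $M$ homomorphically onto a single hyperedge (Lemma~\ref{cycle-hom}(a)) and contracts the preimage classes; for $k=2$ one uses Lemma~\ref{distance} to map the radius-$r$ neighbourhood of the cycle of $M$ so that the distance-$r$ sphere goes to a single vertex of $H$, and contracts that sphere. Lemma~\ref{local} guarantees that, with probability at least $1-\alpha/2$, the relevant neighbourhood in $G^k_{n,p}\oplus M$ is a bounded tree, so any homomorphism of the contracted graph extends back, giving $\Pr[G^k_{n,p}\oplus M \mbox{ not homomorphic}]\le \Pr[G^k_{n,p}\oplus T \mbox{ not homomorphic}]+\alpha/2$ for an explicit bounded hypertree $T$. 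Lemma~\ref{tree} then absorbs $T$ into an increase of $p$ by $z=o(n^{1-k})$, contradicting Corollary~\ref{toolcsp}(b) versus (c). None of this contraction/tree-domination mechanism appears in your sketch, and the step you yourself flag as ``the technical heart'' is, as formulated, aiming at a false statement.

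Two smaller points. In the ``only if'' direction, your claim that ``a positive linear number of pairwise-disjoint triangles appears a.s.'' at density $c/n$ is wrong: the number of triangles is asymptotically Poisson with constant mean, so a triangle appears only with probability bounded away from $0$ and $1$. That is still enough, argued as in the paper: for $0<c<1/2$, with positive probability $G_{n,c/n}$ is a forest (hence $H$-homomorphic) and with positive probability it contains a triangle (hence not $H$-homomorphic when $H$ is triangle-free), so the problem is neither a.s.\ satisfiable nor a.s.\ unsatisfiable and no sharp threshold exists. Finally, note that the theorem is stated for \emph{connected} $H$; your proposed equivalence ``sharp threshold iff every unicyclic instance is $H$-homomorphic'' is exactly Hypothesis~A for homomorphisms, which is what has to be proved (and is open for disconnected $H$, cf.\ Question~\ref{q1}), so it cannot be taken as the reduction target without doing the work above.
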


Note that this generalizes the well-known result that $c$-colourability
has a sharp threshold for $c\geq3$\cite{af}, as can be seen by setting
$k=2$ and taking $H=K_c$.

We do not have a strong feeling
as to whether the ``connected'' condition is necessary here; we discuss
the possibility of extending Theorem \ref{thomo} to disconnected graphs in
Section \ref{sh}. In section \ref{sdg}, we provide a disconnected directed graph $H$
for which the $H$-homomorphism problem is a counterexample to the analogue
of Hypothesis A in the $\csph$ model but not in the $\csp$ model.

\subsection{Tools}
Our main tool is distilled from Friedgut's main theorem in
\cite{ef}.  Friedgut reported to us[private communication] that his
proof can be adapted to the setting of this paper; for completeness,
the first author worked out the details of such an extension in
\cite{htr} as they did not appear in print. To provide Friedgut's
theorem for CSP's in its full power instead of being restricted to
the unsatisfiability property, we consider, as Friedgut did,
properties that are preserved under constraint addition; such
properties are called \emph{monotone}. A property on CSP's is called
\emph{monotone symmetric} if it is monotone and invariant under CSP
automorphisms. For a property $A$, $A_n$ denotes the restriction of
$A$ on CSP's with exactly $n$ variables.  Roughly speaking,
Friedgut's theorem says that for a value of $p$ that is ``within''
the coarse threshold, there is a constant sized instance $M$ such
that $\tau<\Pr[M \subseteq \widehat{\CSP}_{n,p}({\cal P})]<1-\tau$
for some constant $\tau$ which does not depend on $n$, and adding
$M$ to our random CSP boosts the probability of being in $A$ by at
least $2\a>0$, whereas adding a linear number of new random
constraints only boosts it by at most $\a$. First, we must formalize
what we mean by ``adding $M$''. Given two CSP's $M,F$ where $M$ has
$r$ variables, and $F$ has at least $r$ variables, we define
$F\oplus M$ to be the CSP obtained by choosing a random $r$-tuple of
variables in $F$ and then adding $M$ on those $r$ variables. Now we
can formally state an adaptation of Friedgut's theorem to the
setting of this paper. A proof can be found in \cite{htr}:

\begin{theorem}
\label{appendix:toolcsp} Let $A$ be a monotone
symmetric property in $\widehat{\CSP}_{n,p}({\cal P})$ which has a
coarse threshold. There exist $p=p(n)$, $\tau,\alpha,\epsilon>0$,
and a CSP $M$ whose constraints are chosen from $\supp({\cal P})$ such
that for an infinite number of $n$:

\begin{itemize}
\item[(a)] $\alpha<\Pr[\widehat{\CSP}_{n,p}({\cal P}) \in
A]<1-2\alpha$.

\item[(b)] $\Pr[\widehat{\CSP}_{n,(1+\epsilon)p}({\cal P}) \in A]
< 1- 2\alpha$.

\item[(c)] $\Pr[\widehat{\CSP}_{n,p}({\cal P}) \oplus M \in A]
> 1- \alpha$.

\item[(d)] $\tau<\Pr[M \subseteq \widehat{\CSP}_{n,p}({\cal
P})]<1-\tau$.

\end{itemize}
\end{theorem}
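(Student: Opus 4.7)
The plan is to adapt Friedgut's proof of the sharp threshold theorem for monotone graph properties to the CSP setting. The core engine is Bourgain's theorem (the appendix to Friedgut's 1999 paper), a general statement about monotone Boolean functions on product probability spaces: if such a function has a coarse threshold, then there must exist a constant-sized ``booster'' of coordinates whose forced presence raises the probability of the function by a constant amount, while itself having probability bounded away from $0$ and $1$.

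First, I would encode $\csph$ as a product of independent Bernoulli variables, one for each pair consisting of an ordered $k$-tuple of variables and a constraint $C\in\supp(\calp)$, with success probability $\calp(C)\cdot p/k!$. The property $A$ becomes a monotone Boolean function on this product space and inherits the coarse threshold. Applying (the appropriately weighted version of) Bourgain's theorem yields a value $p=p(n)$ sitting strictly inside the coarse-threshold interval, constants $\alpha,\tau,\epsilon>0$, and an ``anchored'' CSP $M^{\star}$ on a specific set of $r$ variables such that $\tau<\Pr[M^{\star}\subseteq \csph]<1-\tau$ and such that forcing $M^{\star}$ into the random instance boosts $\Pr[A]$ by at least $2\alpha$.

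Second, I would use the symmetry of $A$ under CSP automorphisms to pass from the anchored $M^{\star}$ to the randomly-placed $M$. Because $A$ is invariant under relabeling of variables and $\csph$ is exchangeable under permutations of the $n$ variables, forcing $M^{\star}$ onto a fixed $r$-tuple has the same effect on $\Pr[A]$ as forcing it onto a uniformly random ordered $r$-tuple; the latter is exactly $\csph\oplus M$ with $M=M^{\star}$. This gives condition (c), and by the same symmetry the marginal $\Pr[M\subseteq\csph]$ agrees with $\Pr[M^{\star}\subseteq\csph]$, yielding (d). Conditions (a) and (b) are then the definition of a coarse threshold: the failure of a sharp threshold produces an infinite set of $n$ for which, at some $p$, both $\Pr[\csph\in A]$ and $\Pr[\widehat{\CSP}_{n,(1+\epsilon)p}(\calp)\in A]$ lie in $(\alpha,1-2\alpha)$ after a suitable choice of the constants.

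The main obstacle is the clean transfer of Bourgain's Fourier-analytic argument to this weighted product measure: the coordinates carry different success probabilities $\calp(C)p/k!$, so one must check that the influence-versus-correlation inequality still goes through, and, more delicately, that the set of coordinates produced as the ``booster'' can be reinterpreted as a sub-CSP $M$ of $\csph$ (in particular that its size is bounded by a constant depending only on $\calp,\alpha,\tau$, and not on $n$). Once this is in place, the symmetrization over variable permutations is routine. The complete technical adaptation is carried out in \cite{htr}.
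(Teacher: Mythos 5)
Your proposal takes essentially the same route as the paper: the paper states Theorem \ref{appendix:toolcsp} as an adaptation of Friedgut's theorem (via the Bourgain-style booster criterion) and defers the proof to \cite{htr}, and your sketch --- encoding $\widehat{\CSP}_{n,p}({\cal P})$ as a product of independent Bernoulli coordinates indexed by (ordered $k$-tuple, constraint) pairs, extracting a constant-size booster $M$, and symmetrizing over variable permutations to get the $\oplus M$ form together with (a), (b), (d) --- is exactly the intended argument. Like the paper, you correctly identify the weighted-product-measure technicalities as the nontrivial part and defer them to \cite{htr}, so there is nothing to add.
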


Since in our setting $p(n)=c(n)/n^{k-1}$ where $c(n)=\theta(1)$,
Theorem~\ref{appendix:toolcsp}(d) implies that $M$ has exactly one
cycle.

\begin{corollary}
\label{toolcsp} For any $\calp$, if $\csph$ has a coarse
threshold of satisfiability then there exist $p=p(n)$,
$\alpha,\epsilon>0$, and a unicyclic CSP $M$ on a constant number of variables whose
constraints are chosen from $\supp(\calp)$ such that:

\begin{itemize}
\item[(a)] $\alpha<\Pr[\widehat{\CSP}_{n,p}({\cal P}) \mbox{ is unsatisfiable}]<1-2\alpha$.
\item[(b)] $\Pr[\widehat{\CSP}_{n,(1+\epsilon)p}({\cal P}) \mbox{ is unsatisfiable}] < 1- 2\alpha$.
\item[(c)] $\Pr[\widehat{\CSP}_{n,p}({\cal P}) \oplus M \mbox{ is unsatisfiable}]> 1- \alpha$.
\end{itemize}
\end{corollary}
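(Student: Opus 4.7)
The plan is to derive the corollary as an essentially immediate consequence of Theorem~\ref{appendix:toolcsp}, applied with $A$ being the property ``is unsatisfiable''. First I would verify that unsatisfiability is monotone symmetric in the required sense: adding constraints to an unsatisfiable CSP keeps it unsatisfiable, and relabeling variables does not change satisfiability. Since we are assuming $\csph$ has a coarse threshold, the hypotheses of Theorem~\ref{appendix:toolcsp} apply, and its conclusions (a), (b), (c) translate word-for-word into the three items of the corollary. The CSP $M$ it produces already has its constraints drawn from $\supp(\calp)$ and, being built from a finite template independent of $n$, has a constant number of variables.

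The remaining content is the claim that $M$ is unicyclic, which comes from conclusion (d) of Theorem~\ref{appendix:toolcsp} combined with the scaling $p(n)=c(n)/n^{k-1}$ with $c(n)=\Theta(1)$ (a regime forced by the trichotomy recalled earlier in the paper, since any nontrivial threshold must lie in this window). Here I would run a standard first-moment computation. If $M$ has $r$ variables and $s$ constraints, then the expected number of copies of $M$ inside $\csph$ is, up to a constant depending only on $M$ and $\calp$, of order $n^r p^s = \Theta(n^{r-s(k-1)})$. When the constraint hypergraph of $M$ is a hypertree we have $r=s(k-1)+1$, so the expectation diverges and a routine second-moment argument gives $\Pr[M \subseteq \csph] \to 1$, contradicting the upper bound in (d). When the constraint hypergraph has two or more independent cycles we have $r \le s(k-1)-1$, so the expectation vanishes and Markov's inequality gives $\Pr[M \subseteq \csph] \to 0$, contradicting the lower bound in (d). Only the unicyclic case $r=s(k-1)$ is compatible with $\Pr[M \subseteq \csph] = \Theta(1)$, so $M$ must be unicyclic.

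I anticipate no serious obstacle: all the nontrivial machinery has been prepackaged by Theorem~\ref{appendix:toolcsp}, and the entire remaining argument is a counting estimate. The one small point of care is to confirm that the $p$ produced by Theorem~\ref{appendix:toolcsp} genuinely lies in the $c/n^{k-1}$ regime with $c=\Theta(1)$; once that is in hand, the first-moment asymptotic $n^{r-s(k-1)}$ automatically selects the unicyclic case and no further work is required.
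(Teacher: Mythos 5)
Your proposal follows exactly the paper's own route: the paper obtains the corollary by applying Theorem~\ref{appendix:toolcsp} to the monotone symmetric property ``unsatisfiable'' and then remarks, in one line, that since $p(n)=c(n)/n^{k-1}$ with $c(n)=\Theta(1)$, condition (d) forces $M$ to have exactly one cycle; your first-moment/second-moment counting simply fills in that remark. One tiny point both you and the paper gloss over: the bound $r\le s(k-1)-1$ for ``two or more independent cycles'' requires the two cycles to lie in the same component (two disjoint unicyclic components give $r=s(k-1)$ and containment probability $\Theta(1)$), but this does not change the approach, which is essentially identical to the paper's.
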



Our next tool proves some properties for local parts of a random CSP.

\begin{lemma}
\label{local} Suppose that $p<cn^{1-k}$ for some positive constant
$c$, and let $G$ be an instance of $\widehat{\CSP}_{n,p}({\cal
P})$. Let $t$ be a positive constant integer and choose a set $T$ of $t$ random variables. Then for every
$\epsilon>0$, and integer $r>0$ there exists an integer
$L(c,t,r,\epsilon)$ such that with  probability at least
$1-\epsilon$:
\begin{itemize}
\item[(i):] No constraint of $G$ contains more than one variable of $T$.
\item[(ii):] $G$
induces a forest on the set of the variables that are of distance
at most $r$ from $T$. \item[(iii):] There are at most $L$
variables that are of distance at most $r$ from $T$.
\end{itemize}
\end{lemma}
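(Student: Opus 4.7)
The plan is to prove each of (i), (ii), (iii) with probability at least $1-\epsilon/3$ via a first-moment estimate, then take a union bound. The underlying engine is that $p = O(n^{1-k})$ forces the expected number of constraints incident to any fixed vertex to be $\binom{n-1}{k-1}p = O(1)$, so local exploration behaves like a branching process with bounded mean offspring. Part (i) is immediate: the number of $k$-tuples of variables containing two or more members of $T$ is $O(t^2 n^{k-2})$, and in $\csph$ each such tuple receives a constraint from $\supp(\calp)$ with probability at most $|\supp(\calp)|\,p = O(n^{1-k})$, so the expected number of ``bad'' constraints (those witnessing failure of (i)) is $O(n^{-1}) = o(1)$.

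For part (iii), I would explore the constraint hypergraph by breadth-first search rooted at $T$. By the branching bound above, the $i$-th BFS generation is stochastically dominated by the $i$-th generation of a Galton--Watson process with offspring mean at most some constant $\lambda = \lambda(c, k, \calp)$, started with $t$ roots. The expected size of the first $r$ generations is therefore a constant $M = M(c, t, r, \calp)$, and Markov's inequality gives $|N_r| \leq L := \lceil 3M/\epsilon \rceil$ except with probability at most $\epsilon/3$, where $N_r$ denotes the set of variables within distance $r$ of $T$.

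Part (ii) is the main step. The key observation is a cycle-diameter bound: any cycle $(v_0, e_1, v_1, \ldots, e_s, v_0)$ whose core vertices $v_0,\ldots,v_{s-1}$ all lie in $N_r$ must satisfy $s \leq 2r+1$, because if $v_0$ is chosen as the core vertex closest to $T$ with $\dist(T, v_0) = \ell \leq r$, then $\dist(T, v_i) \leq \ell + \min(i, s-i)$ for all $i$, so taking $i = \lfloor s/2 \rfloor$ forces $\lfloor s/2 \rfloor \leq r - \ell \leq r$. With this a priori cap in hand, I union bound over ``cycle-plus-pendant-path'' subhypergraphs: a cycle of core length $s \leq 2r+1$ together with a path of length $\ell \leq r$ from some vertex of $T$ to a core vertex of the cycle. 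Such a configuration uses $s + \ell$ hyperedges and at most $(s+\ell)(k-1) - 1$ vertices outside $T$, yielding $O(n^{(s+\ell)(k-1)-1})$ labeled copies, each present with probability $O(p^{s+\ell}) = O(n^{-(s+\ell)(k-1)})$; thus the expected count is $O(n^{-1})$ per shape, and summing over the finitely many admissible $(s, \ell)$ gives failure probability $o(1)$. The main obstacle I anticipate is exactly identifying and exploiting this diameter bound --- without it, arbitrarily long cycles could a priori lurk in $N_r$ and one would lack a finite union bound over shapes. Once this is in place, the rest is the standard ``short cycles are rare in sparse random hypergraphs'' calculation, and a final union bound over (i), (ii), (iii) concludes.
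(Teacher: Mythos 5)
Your overall strategy is exactly the paper's --- first-moment bounds for (i) and (ii), and Markov's inequality applied to the expected size of the $r$-neighbourhood for (iii) --- and your treatments of (i) and (iii) are fine. The genuine gap is in the step you yourself single out as the crux of (ii). The ``cycle-diameter bound'' is false, and the argument offered for it is a non sequitur: from $\dist(T,v_i)\le \ell+\min(i,s-i)$ together with $\dist(T,v_i)\le r$ nothing follows about $\ell+\lfloor s/2\rfloor$; you would need a \emph{lower} bound on $\dist(T,v_i)$ in terms of position on the cycle, and no such bound holds because cycle vertices may reach $T$ by paths that leave the cycle. Even for $t=1$ the statement fails (take $r=1$, a hub $a\in T$ adjacent to every vertex of a cycle of length $100$: all core vertices lie in $N_1$ but $s=100>2r+1$). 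Worse, for $t\ge 2$ the weaker claim your union bound actually needs --- that whenever the subgraph induced on $N_r$ is not a forest, $G$ contains a cycle of length at most $2r+1$ plus a pendant path of length at most $r$ to $T$ --- is also false: with $T=\{a,b\}$, two disjoint paths of length $r$ from $a$ ending at $x_1,x_2$, two from $b$ ending at $y_1,y_2$, and edges $x_1y_1$, $x_2y_2$, the induced subgraph on $N_r$ contains a (unique) cycle, of length $4r+2$, and no cycle of length at most $2r+1$ at all; so this failure mode lies outside your enumerated class and your first moment does not bound it.

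The repair is routine and stays within your framework. Explore from the $t$ roots by breadth-first search to depth $r$, obtaining a spanning forest of the induced subgraph on $N_r$ with at most $t$ trees of depth at most $r$. If that induced subgraph has a cycle, then either some hyperedge not used by the BFS has two of its vertices in the same tree --- which yields a cycle of length at most $2r+1$ inside $N_r$ together with a root-to-cycle path of length at most $r$, i.e.\ a witness of your type --- or at least $j\ge2$ such hyperedges join distinct trees so as to close a cycle through $j$ roots; the corresponding witness (those $j$ cross hyperedges plus, in each of the $j$ trees, the tree paths from the root to the attachment vertices) has at most $j(2r+1)$ hyperedges and expected number of copies $O(n^{-j})$, by the same vertex/edge count you already perform. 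Since $t,r,k$ are constants there are finitely many shapes, the total failure probability is $o(1)$, and the rest of your argument goes through unchanged. For comparison, the paper's own proof of (ii) is a one-line first-moment count of cycles of length at most $2r$ that meet $T$ --- essentially your calculation, stated tersely --- but since your write-up derives the restriction to short cycles from a false diameter bound, you need the corrected reduction above (or at least the $t=1$-style BFS-chord argument plus the multi-root witnesses) for the proof to be complete.
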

\begin{proof}
Let $E_1$, $E_2$, and $E_3$ denote the events $(i)$, $(ii)$, and
$(iii)$ respectively. Trivially
\begin{equation}
\label{eq1} \Pr[E_1] \ge 1 - \sum_{i=2}^k n^{k-i}{t \choose i}k! p=1-o(1).
\end{equation}
The expected number of the cycles of size at most $2r$ which
contain at least one variable in $T$ is at most $t\sum_{i=2}^{2r}
n^{ik-i-1}p^{i}$. Thus
\begin{equation}
\label{eq2} \Pr[E_2] \ge 1-t\sum_{i=2}^{2r} n^{ik-i-1}p^{i} \ge
1-\frac{2tr(1+c)^{2r}}{n}=1-o(1).
\end{equation}

The expected number of the variables in a distance of at most $r$
from $T$ is at most $t\sum_{i=1}^{r}n^{ik-i}p^i$. So by
Chebychev's inequality, for sufficiently large $L$:
\begin{equation}
\label{eq3} \Pr[E_3] \ge 1-\frac{t\sum_{i=1}^{r}n^{ik-i}p^i}{L}
\ge 1-\frac{\epsilon}{2}.
\end{equation}
The lemma follows from (\ref{eq1}), (\ref{eq2}), and (\ref{eq3}).
\end{proof}

Our third tool is easily proven with a straightforward first moment
calculation and concentration argument (via e.g. the second moment
method or Talagrand's inequality); we omit the details.

\begin{lemma}
\label{tree} Let $T$ be a tree-CSP whose constraints are in
$\supp({\cal P})$. There exists $z=o(n^{1-k})$ such that a.s.
$\widehat{\CSP}_{n,z}({\cal P})$ contains $T$ as a sub-CSP.
\end{lemma}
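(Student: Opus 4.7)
The plan is a standard first-moment and second-moment argument at a slightly sub-threshold density. Let $T$ be a tree-CSP with $e$ constraints and $v$ variables; since $T$ is a connected $k$-uniform hypertree, distinct hyperedges of $T$ share at most one vertex, and a routine induction yields $v=e(k-1)+1$. I would set $z:=n^{1-k}/\log n$, which is $o(n^{1-k})$ as required.

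For the first moment, let $X$ count the labelled copies of $T$ in $\widehat{\CSP}_{n,z}({\cal P})$: the injections $\phi:V(T)\hookrightarrow\{1,\ldots,n\}$ such that every constraint $C_i$ of $T$ on variables $(x_{i,1},\ldots,x_{i,k})$ is placed on the ordered tuple $(\phi(x_{i,1}),\ldots,\phi(x_{i,k}))$ in $\widehat{\CSP}_{n,z}({\cal P})$. Because $T$ is a simple hypergraph, the $e$ ordered tuples belonging to a single copy are pairwise distinct, so these $e$ placement events are independent, each of probability $\calp(C_i)\cdot z/k!$. Hence $\Ex[X]=n(n-1)\cdots(n-v+1)\prod_{i=1}^{e}\calp(C_i)z/k!=\Theta(n^{v}z^{e})=\Theta(n/(\log n)^{e})\to\infty$.

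For concentration I would compute the second moment by classifying ordered pairs of copies $(\phi_1,\phi_2)$ by their shared sub-hypergraph $T'\subseteq T$: $V(T')$ consists of those $x\in V(T)$ with $\phi_1(x)=\phi_2(x)$, and $E(T')$ consists of the constraint-edges of $T$ whose two placements coincide as events in $\widehat{\CSP}_{n,z}({\cal P})$. Disjoint pairs contribute $(1+o(1))\Ex[X]^{2}$. For each nonempty $T'$ with $s=|V(T')|$ and $m=|E(T')|$, the number of pairs of that overlap type is $O(n^{2v-s})$ and the joint probability of both copies being present is $\Theta(z^{2e-m})$; since $T'$ is a sub-forest of the hypertree $T$, $m(k-1)\le s-1$, so
$$\frac{O(n^{2v-s}z^{2e-m})}{\Ex[X]^{2}}\;=\;O\!\left(n^{\,m(k-1)-s}(\log n)^{m}\right)\;=\;O\!\left(n^{-1}(\log n)^{O(1)}\right)\;=\;o(1).$$
Summing over the finitely many overlap types gives $\Var(X)=o(\Ex[X]^{2})$, and Chebyshev's inequality then yields $\Pr[X=0]\to 0$, so $\widehat{\CSP}_{n,z}({\cal P})$ contains $T$ as a sub-CSP almost surely.

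The only step that requires genuine care is the second-moment bound, and it rests entirely on the inequality $m(k-1)\le s-1$ valid for every nonempty sub-forest of a hypertree. This is simply the acyclicity of $T$, which is presumably why the authors feel comfortable omitting the calculation.
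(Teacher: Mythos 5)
The paper gives no proof of Lemma \ref{tree} at all --- it only remarks that a first-moment calculation plus a concentration argument (second moment or Talagrand) suffices --- and your proposal is exactly that argument, so your route is the intended one; the first-moment computation, the identity $v=e(k-1)+1$, and the choice $z=n^{1-k}/\log n$ are all fine.

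There is, however, one step in your second-moment bookkeeping that fails as written. In the $\csph$ model the placement events are indexed by pairs (ordered $k$-tuple, constraint), so two copies $\phi_1,\phi_2$ of $T$ are dependent exactly when they share such a pair --- and this can happen through two \emph{different} edges of $T$ that carry the same constraint and are mapped by $\phi_1$ and $\phi_2$ to the same ordered tuple, without any vertex $x$ satisfying $\phi_1(x)=\phi_2(x)$. Your classification by the pointwise-agreement sub-hypergraph $T'$ does not record these ``skew'' coincidences, so both the claim that pairs with $T'=\emptyset$ contribute $(1+o(1))\Ex[X]^2$ (such pairs need not be independent) and the claim that pairs of type $T'$ have joint probability $\Theta(z^{2e-m})$ (the true number of distinct events can be smaller than $2e-m$) are wrong in general. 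This is not a vacuous worry in this paper: the trees to which the lemma is applied (the stars built in the proofs of Theorems \ref{thomo} and \ref{tdkt}) contain many edges bearing identical constraints. The repair is routine: classify pairs instead by the number $j\ge 1$ of shared (tuple, constraint) events; these arise from $j$ distinct edges on each side, and the $j$ edges on the $\phi_2$-side span at least $j(k-1)+1$ vertices of $T$ whose images are then forced, so such pairs number $O(n^{2v-j(k-1)-1})$ and contribute $O\bigl(n^{2v-j(k-1)-1}z^{2e-j}\bigr)=O\bigl((\log n)^{j}n^{-1}\bigr)\Ex[X]^2=o(\Ex[X]^2)$. With that bookkeeping your Chebyshev conclusion stands, so the gap is real but easily closed.
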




\section{Difficulty}\label{s3}
Here we prove Observation \ref{thard}, thus showing that characterizing those
distributions $\calp$ for which $\csp$ has a sharp threshold is at least as difficult as determining the location
of all such thresholds.

Suppose, for example, that one wanted to know the 3-colourability
threshold; i.e. the value $c=c(n)$ such that for all $\e>0$,
$\chi(G_{n,p=c(n)-\e})$ is a.s. 3-colourable and
$\chi(G_{n,p=c(n)+\e})$ is a.s. not 3-colourable. (The existence of
this threshold was proven in \cite{af}.) We will construct a family
of distributions $\calp$ such that determining which of those
distributions have sharp thresholds is sufficient to determine the
3-colourability threshold.

We set $d=5$ and $k=2$, and define  two constraints by listing
their  pairs of forbidden values:
\begin{eqnarray*}
C_1&=&\{(4,4),(5,5)\} \cup (\{1,2,3\}\times\{4,5\})\cup (\{4,5\}\times\{1,2,3\}),\\
C_2&=&\{(1,1),(2,2),(3,3)\} \cup (\{1,2,3\}\times\{4,5\})\cup (\{4,5\}\times\{1,2,3\}).
\end{eqnarray*}
Note that each constraint forces the endpoints of every edge to take values that are either
both in $\{1,2,3\}$ or both in $\{4,5\}$.  A $C_1$ constraint says that they
have to be different values if they are both in $\{4,5\}$.  A $C_2$ constraint says
that they have to be different values if they are both in $\{1,2,3\}$.

We let $C_1$ occur with probability $q$ and $C_2$ occur with
probability $1-q$ in ${\cal P}$.
Set $c(q)=(1-q)/q$.

\begin{fact}\label{f3c}
\begin{enumerate}
\item[(a)] If $G_{n,p=c(q)/n}$ is a.s. 3-colourable, then $\csp$ has a sharp threshold.
\item[(b)] If there is some $\e>0$ such that $G_{n,p=(c(q)-\e)/n}$
is a.s. not 3-colourable, then $\csp$ has a coarse threshold.
\end{enumerate}
\end{fact}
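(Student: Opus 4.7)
The proof rests on a structural reformulation of satisfiability of $\csp$. Because every ``cross'' pair in $(\{1,2,3\}\times\{4,5\})\cup(\{4,5\}\times\{1,2,3\})$ is forbidden by both $C_1$ and $C_2$, in any satisfying assignment each connected component of the constraint graph lies entirely in $L:=\{1,2,3\}$ (``low'') or entirely in $H:=\{4,5\}$ (``high''). A low component then imposes a proper 3-colouring with colours $\{1,2,3\}$ on its $C_2$-edges and no restriction on its $C_1$-edges, while a high component imposes bipartiteness on its $C_1$-edges and no restriction on its $C_2$-edges. Non-giant components of $G_{n,c/n}$ are a.s.\ trees or unicyclic and are trivially labellable ``low'' (every unicyclic graph is 3-colourable), so satisfiability reduces to whether the giant component admits a valid low or high labelling.

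Since $C_1$- and $C_2$-constraints are assigned independently in $\csp$, the $C_1$-subgraph and $C_2$-subgraph are independent Erd\H{o}s--R\'enyi graphs $G_{n,qc/n}$ and $G_{n,(1-q)c/n}$. Let $c_3=c_3(n)$ denote the sharp 3-colourability threshold of $G_{n,c/n}$ (Achlioptas--Friedgut~\cite{af}). The ``low'' option succeeds on the giant a.s.\ iff $(1-q)c<c_3$, since the giant of $G_{n,(1-q)c/n}$ is contained in that of $G_{n,c/n}$ and the small components are trivially 3-colourable. The ``high'' option fails a.s.\ whenever $qc>1$ (since the supercritical $G_{n,qc/n}$ a.s.\ contains odd cycles); when $qc<1$ is bounded away from $1$, the $C_1$-subgraph is subcritical, its number of odd cycles is asymptotically Poisson with positive bounded mean, and ``high'' succeeds with probability strictly inside $(0,1)$.

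For Part (a), the assumption $c(q)\le c_3$ is equivalent to $c_3/(1-q)\ge c(q)/(1-q)=1/q$. Then for $c<c_3/(1-q)$ the low labelling succeeds a.s., so $\csp$ is a.s.\ satisfiable. For $c>c_3/(1-q)$ we simultaneously have $(1-q)c>c_3$ (low a.s.\ fails) and $qc>qc_3/(1-q)\ge 1$ (high a.s.\ fails), hence $\csp$ is a.s.\ unsatisfiable. This yields a sharp threshold at $p=c_3(n)/((1-q)n)$.

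For Part (b), $c_3<c(q)-\e$ gives $c_3/(1-q)<1/q$, and throughout the positive-length interval $c\in (c_3/(1-q),\,1/q)$ we have $(1-q)c>c_3$ (low a.s.\ fails) and $qc<1$ is bounded away from $1$ (so high succeeds with probability strictly in $(0,1)$). Hence the probability that $\csp$ is satisfiable lies strictly in $(0,1)$ throughout an interval of $c$ of positive length, which is exactly a coarse threshold. The main technical obstacle is rigorously justifying that the 3-colourability of the $C_2$-subgraph restricted to the giant of $G_{n,c/n}$ coincides a.s.\ with the 3-colourability of $G_{n,(1-q)c/n}$ as a whole; this follows from standard random-graph arguments (the giants nest, small components are easily 3-coloured) but requires care to produce the precise ``a.s.'' versus ``probability bounded in $(0,1)$'' distinctions needed here.
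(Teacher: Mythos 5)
Your decomposition is exactly the paper's: every component is forced entirely into $\{1,2,3\}$ or $\{4,5\}$, the $C_1$- and $C_2$-subgraphs $G_1,G_2$ are independent copies of $G_{n,qc/n}$ and $G_{n,(1-q)c/n}$, non-giant components are harmless, and satisfiability of the giant reduces to ``$G_2$ restricted to the giant is $3$-colourable'' or ``$G_1$ restricted to the giant is bipartite''. Your part (a) is correct and matches the paper (sharp threshold at $c_3(n)/(1-q)$), and your treatment of the $C_2$-side nesting (components of $G_2$ off the giant lie in tree/unicyclic components of $G$, hence are $3$-colourable) is the easy point, not the main obstacle.

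The genuine gap is in part (b), in the claim that in the range $qc<1$ the ``high'' option fails -- equivalently the CSP is unsatisfiable -- with probability bounded away from $0$. Your justification is that the number of odd cycles of the subcritical $G_1$ is asymptotically Poisson with positive mean; but an odd cycle of $G_1$ that lies in a small (tree or unicyclic) component of the full graph $G$ is harmless, since that component can simply be labelled ``low''. What is needed is that with probability bounded away from zero $G_1$ contains an odd cycle \emph{all of whose vertices lie in the giant component $T$ of $G_{n,c/n}$}, and this does not follow from the Poisson count alone: a priori the few odd cycles of $G_1$ could avoid $T$. The paper supplies precisely this step: the number of triangles of $G_1$ is asymptotically Poisson with mean $c_1^3/6$, and, conditioned on three fixed vertices forming a triangle of $G_1$, the probability that all three lie in $T$ is still at least roughly $(|T|/n)^3$, which tends to a positive constant (the conditioning is on part of the graph $G$ itself, so this comparison needs an argument). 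The same distinction also tidies your part (a): for $qc>1$ the relevant fact is that the giant of $G_1$ is a.s.\ non-bipartite and is contained in $T$, not merely that $G_1$ contains odd cycles somewhere. With the odd-cycle-in-the-giant step added, your argument is the paper's proof.
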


Thus, determining the type of the
threshold for all such models
$\CSP_{n,p}({\cal P})$ requires
the knowledge of for which values of $c$, $G(n,\frac{c}{n})$ is a.s.
$3$-colourable, and for which values it is a.s. not $3$-colourable.

\proofstart
Choose our $\csp$ by first taking $G_{n,p=c/n}$ and
then setting each edge to be $C_1$ with probability $q$ and $C_2$ otherwise.
Let $G_1,G_2$ be the subgraphs formed by the edges chosen to be $C_1,C_2$ respectively.
If $c<1$ then all components of $G_{n,p=c/n}$ are trees or unicycles
and the CSP is trivially satisfiable.  So we
can focus on the range $c>1$ and we let $T$ denote
the giant component of $G_{n,p=c/n}$.  Note that
the variables of $T$ must either all take values from $\{1,2,3\}$ or
all take values from $\{4,5\}$.

{\em Case 1: $c>\inv{q}$.} Then $G_1$ is equivalent to $G_{n,p=c_1/n}$
for $c_1=cq>1$ and it follows easily that a.s. $G_1$ contains
a giant component which is not 2-colourable. This giant component is a subgraph
of $T$ and so the variables of $T$ must all take values from $\{1,2,3\}$.
It follows that the CSP is satisfiable iff $G_2$ is 3-colourable. Note
that  $G_2$ is equivalent to $G_{n,p=c_2/n}$
for $c_2=c\times(1-q)>c(q)$.

{\em Case 2: $c<\inv{q}$.}
Then $G_1$ is equivalent to $G_{n,p=c_1/n}$
for $c_1=cq<1$ and  $G_2$ is equivalent to $G_{n,p=c_2/n}$
for $c_2=c\times(1-q)<c(q)$.  If $G_2$ is a.s. 3-colourable then the CSP
is a.s. satisfiable. If $G_2$ is a.s. not 3-colourable then
the CSP is satisfiable iff $T$ is 2-colourable; i.e., if
$G_1$ does not have an odd cycle lying within $T$. Since $c_1<1$, $G_1$ is a random
graph with edge-density below the critical point.  So with probability at least some
positive constant, it has no odd cycle at all, let alone one lying within $T$. On the other hand,
the distribution of the number of triangles in $G_1$ is asymptotically
Poisson with mean $c_1^3/6$ (see, eg Section 3.3 of \cite{jlr}), and so the probability
of containing at least one triangle tends to $1-\e^{-c_1^3/6}$.  If we condition
on $u,v,w$ forming a triangle in $G_1$,
then the probability that they are all in the giant component
$T$ is easily seen to not increase, and so is at least $(|T|/n)^3$ which tends to a positive
constant. Therefore, the probability that $G_1$ has an odd cycle in $T$
is at least some positive constant.  This implies that the CSP is neither a.s. satisfiable nor
a.s. unsatisfiable.

Fact \ref{f3c} now follows.  If $G_{n,p=c(q)/n}$ is a.s. 3-colourable, then $\csp$ has a sharp threshold
which lies somewhere above $\inv{q}$. If there is some $\e>0$ such that $G_{n,p=(c(q)-\e)/n}$
is a.s. not 3-colourable, then for all $\inv{q}-\frac{\e}{1-q}<c<\inv{q}$,
we are in Case 2 where $c_2>c(q)-\e$ and so $G_2$ is a.s. not 3-colourable.
In that range of $c$, $CSP_{n,p=c/n}(\calp)$ is neither
a.s. satisfiable nor a.s. unsatisfiable. So
$\csp$ has a coarse threshold running from $\inv{q}-\d$ to $\inv{q}$
for some $\d>\e/(1-q)$.
\proofend

It is straightforward to adapt this example so that, instead of 3-colourability,
we use any model $\CSP_{n,p}({\cal P})$ which has a sharp threshold. Suppose that $\calp$ is over
constraints of size $k$ with domain-size $d$. We will create a distribution
$\calp'$ over constraints of size $k$ and with domain-size $d+2$.  All constraints will enforce:
\begin{enumerate}
\item[(1)] All $k$ variables take values in $\{1,...,d\}$ or all $k$ variables
take values in $\{d+1,d+2\}$.
\end{enumerate}
Constraint $C^*$ also enforces:
\begin{enumerate}
\item[(2)] The first two variables cannot both be $d+1$ and they cannot both be $d+2$
\end{enumerate}
For each constraint $C_i\in\supp(\calp)$ we have a constraint $C_i'$ which
has the same restrictions as $C_i$, and also enforces (1).
We set $\calp'(C_i')=(1-q)\calp(C_i)$ and we set $\calp'(C^*)=q$.

Note that if $\calp$ is simply the 3-colouring CSP then this yields the example
from above. Similar reasoning to that above shows that we cannot know which values of
$q$ yield a sharp threshold for $\calp'$ without knowing the location of the threshold for $\calp$.
This yields
Observation \ref{thard}.

\section{Homomorphisms}\label{sh}

In this section, we prove Theorem~\ref{thomo} which concerns
$H$-homomorphisms. Let $G^k_{n,p}$  denote the random $k$-uniform
hypergraph on $n$ vertices where each $k$-tuple is present as a
hyperedge with probability $p$.

We begin with a technical lemma:

\begin{lemma}
\label{distance} Let $H$ be a connected graph which contains a
triangle. Let $u$ be a vertex of $H$ and $M$ be a unicyclic graph
with unique cycle $C$. Denote the vertices of $M$ in a distance of
exactly $r \ge |V(H)|+3$ from $C$ by $U$. There is a homomorphism
from $M$ to $H$ such that all vertices in $U$ are mapped to $u$.
\end{lemma}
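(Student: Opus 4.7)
The plan is to construct $h\colon M\to H$ in two phases: first map the unique cycle $C$ into a triangle of $H$, then extend over each tree hanging off $C$ by following a carefully chosen walk in $H$ of length $r$ that ends at $u$. The crucial move is to force each $v\in C$ to land on a triangle vertex of $H$; this gives us the parity flexibility we will need at the root of every subtree.

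For Phase~1, let $\{a,b,c\}$ be a triangle of $H$. Every cycle is homomorphic to a triangle (a fact already used inside the proof of Lemma~\ref{cycle-hom}(b)), so I can fix any homomorphism $h_0\colon C\to\{a,b,c\}$. For Phase~2, note that $M\setminus E(C)$ decomposes into a forest of trees $T_v$ rooted at the vertices $v\in C$, and the distance in $M$ from a vertex $y\in T_v$ to $C$ is simply the depth of $y$ in $T_v$. I would then establish the subsidiary claim that for every $x\in\{a,b,c\}$ and every $r\ge |V(H)|+3$ there is a walk $w_0=x,w_1,\ldots,w_r=u$ of length exactly $r$ in $H$. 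Given such a walk for each $v\in C$ (with $w_0=h_0(v)$), define $h(y)=w_d$ whenever $y$ has depth $d\le r$ in $T_v$; adjacency is preserved because parent/child pairs in $T_v$ map to consecutive walk vertices. For depths $d>r$, fix any neighbor $u'$ of $u$ (one exists because $H$ is connected and has at least three vertices) and alternate between $u$ and $u'$ according to the parity of $d-r$.

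The main obstacle is the subsidiary claim, and I would handle it by splitting on the parity of $r$. Let $d:=d_H(x,u)\le |V(H)|-1$ by connectivity. If $r\equiv d\pmod 2$, take a shortest $x$-$u$ path of length $d$ and pad with back-and-forth on any edge incident to $u$; this realises every length in $\{d,d+2,d+4,\ldots\}$. If $r\not\equiv d\pmod 2$, first traverse the triangle at $x$ (the closed walk $x\to y\to z\to x$ of length $3$), then follow the shortest path from $x$ to $u$, then pad as before; this realises every length in $\{d+3,d+5,d+7,\ldots\}$. Since $|V(H)|+3\ge d+4$, the value $r$ always lies in the appropriate family. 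This completes the construction of $h$ on $M$ with $h(U)=\{u\}$.
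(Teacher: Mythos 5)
Your proposal is correct and follows essentially the same route as the paper: map the cycle $C$ into a triangle of $H$, then push each hanging tree out along a walk of length exactly $r$ ending at $u$, and extend trivially beyond depth $r$. The only difference is that you spell out the details the paper leaves as observations (the parity argument for the existence of walks of length exactly $r$, and the $u$/$u'$ alternation past depth $r$), and both of these are handled correctly.
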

\begin{proof}
Let $h$ be a homomorphism from $C$ to the triangle $(v_1,v_2,v_3)$
of $H$. Observe that for $i=1,2,3$, there exist walks
$(v_i=)v_{i,0},\ldots,v_{i,r}(=u)$ of length exactly $r$ in $H$.
Let $w$ be a vertex in $M$ in the distance of $j \le r$ from $C$,
and $w'$ be the vertex of $C$ which has the distance $j$ from $w$.
Extend $h$ by assigning $h(w)=v_{ij}$ where $h(w')=v_i$. Observe
that $h$ is a partial homomorphism from $M$ to $H$ which maps
every vertex in $U$ to $u$. Trivially $h$ can be extended to a
homomorphism from $M$ to $H$.
\end{proof}

{\bf Proof of Theorem \ref{thomo}.} Let $H$ be some $k$-uniform hypergraph, and assume that
the $H$-homomorphism problem has a coarse threshold.  Let $M,p,\a,\e$ be as guaranteed by
Corollary~\ref{toolcsp}.  In this setting, $M$ is a
$k$-uniform unicyclic hypergraph, such that adding $M$ to $G^k_{n,p}$ boosts
the probability of not having a homomorphism to $H$ by at least
$2\a$.

Our strategy will be to show that adding $M$ to $G^k_{n,p}$ does not
increase the probability of not having a homomorphism to $H$ by more
than adding a copy of $G^k_{n,z}$ for some $z(n)=o(n^{1-k})$.  We
are assuming that the former boosts that probability to at least
$1-\a$ and thus so must the latter.  But that will contradict
Corollary \ref{toolcsp}(b).  To show this, we will construct a
hypertree $T$ such that the probability that $G^k_{n,p} \oplus M$
has no homomorphism to $H$ is at most the probability that
$G^k_{n,p} \oplus T$ has no homomorphism to $H$.  Then we simply
apply Lemma \ref{tree} to obtain our desired $z$.

We begin with the case $k\geq3$.

Consider $G=G^k_{n,p} \oplus M$. Let $M^+$ be the subgraph of $G$
consisting of all hyperedges that contain at least one vertex of
$M$ (and, of course all vertices in those hyperedges); in other
words, $M^+$ is the subhypergraph induced by the vertices of $M$
and all their neighbours. Lemma~\ref{local} implies that there is
some constant $L$ such that, defining $E$ to be the event that
``$M^+$ is unicyclic and has at most $L$ vertices,
no hyperedge of $G^k_{n,p}$ contains more than one vertex of $M$,
and the vertices of distance at most 2 from $M^+$ induce a forest'',
we have $\pr(E)\geq 1-\frac{\a}{2}$.

Since $M$ is unicylic and $k\geq3$, by Lemma~\ref{cycle-hom}(a) there exists a
homomorphism $h$ from $M$ to a single edge, say
$(v_1,\ldots,v_k)$. Let $h_i$ be the set of the vertices in $M$
that are mapped by $h$ to $v_i$. Obtain the hypergraph $G'$ from
$G$ by (i) removing all edges in $M$; (ii) contracting all of the
vertices in $h_i$ into one single new vertex $u_i$, for each $1
\le i \leq k$; (iii) adding the single hyperedge
$(u_1,\ldots,u_k)$.

Suppose that $h'$ is a homomorphism from $G'$ to $H$. Then a
mapping from the vertices of $G$ to the vertices of $H$ which maps
every vertex $v$ in $G-M$ to $h'(v)$, and every vertex in $h_i$
to $h'(u_i)$ is a homomorphism from $G$ to $H$.  Thus, if $G'$ is
homomorphic to $H$ then so is $G$.

We define our hypertree $T$ as follows: $T$ has a hyperedge
$(t_1,...,t_k)$, and each $t_i$ lies in $L$ other hyperedges.
Only $t_1,...,t_k$ lie in more than one edge of $T$.  Thus, $T$
has $k+k(k-1)L$ vertices and $kL+1$ hyperedges. If $E$ holds, then
the subgraph of $G'$ induced by all edges containing $\{u_1,\ldots,u_k\}$
forms a tree; note that it is a subtree of $T$. It follows that $G^k_{n,p}\oplus T$ is at
least as likely to be non-homomorphic to $H$ as $G'$ is, so:
\begin{eqnarray*}
\Pr[G^k_{n,p} \oplus M \mbox{ is not homomorphic to $H$}]
&\le &\Pr[G^k_{n,p} \oplus M \mbox{ is not homomorphic to $H$}|E] +\pr(\ov{E})\\
&\le &\Pr[G^k_{n,p} \oplus T \mbox{ is not homomorphic to $H$} ] + \frac{\alpha}{2}.
\end{eqnarray*}
By Lemma~\ref{tree}, there is some $z=o(n)$ such that increasing $p$ by an additional $z$ a.s. results
in the addition of a copy of $T$. Thus for every $\e>0$:
\[\Pr[G^k_{n,p} \oplus T \mbox{ is not homomorphic to $H$}] \le
\Pr[G^k_{n,(1+\epsilon)p}\mbox{ is not homomorphic to $H$}]\] which
yields a contradiction to Corollary~\ref{toolcsp}(b).

This proves the case where $k\geq 3$, so we now turn to the case
$k=2$.  If $H$ contains no triangle, then $K_3$ is not homomorphic
to $H$.  Thus, $K_3$ forms a unicyclic unsatisfiable CSP using the
$H$-colouring constraints and so we do not have a sharp threshold,
since for any $0<c<\hf$:(i) with probability at least some positive constant,
$G_{n,p=c/n}$ is a forest, and hence
has a homomorphism to $H$;
and (ii) with probability at least some positive constant, $G_{n,p=c/n}$
contains a triangle and hence has no homomorphism to $H$.
So we will focus on graphs $H$ that contain a triangle.  Our proof
follows along the same lines as the case $k\geq 3$, but is
complicated a bit since we can no longer assume that $M$ is
homomorphic to a single edge. We only highlight the differences.

Define $M^+$ to be the subgraph of $G=G^k_{n,p}\oplus M$ induced
by all vertices within distance $r=|V(H)|+|V(M)|+3$ of the unique
cycle of $M$.  By Lemma~\ref{local} there is some constant $L$
such that with probability at least $1-\frac{\a}{2}$: $M^+$ is
unicyclic and has at most $L$ vertices, no hyperedge of $G^k_{n,p}$ contains
more than one vertex of $M$,
and the vertices of distance at most 2 from $M^+$ induce a forest.

Define $U$ to be the set of vertices of $G$ that are of distance
exactly $r=|V(H)|+|V(M)|+3$ from the unique cycle of $M$. Consider
any vertex $u\in H$. By Lemma~\ref{distance}, if $M^+$ is unicyclic
then there is a homomorphism from $M^+$ to $H$ such that all
vertices in $U$ are mapped to $u$.

Obtain the graph $G'$ from $G$ by (i) removing all
of the vertices of distance less than $r$ from the  unique cycle
of $M$, and (ii) contracting
$U$ into a single new vertex $u$. Suppose that $h'$ is a
homomorphism from $G'$ to $H$. Then by the previous paragraph,
$h'$ can be extended to a homomorphism from $G$ to $H$ where
each vertex $v\in V(G')-u$ is mapped to $h'(v)$, and every vertex in $U$
is mapped to $h'(u)$. Thus, if $G'$ is homomorphic to $H$ then so is $G$.

We now define $T$ to be
the tree which consists of a vertex adjacent to $L$
leaves. Since the degree of $u$ in $G'$ is at most $L$,
and using the fact that all vertices of $M$ are deleted
when forming $G'$ (here is where we require $r>|M|$),  the rest now
follows as in the $k\geq3$ case.
\proofend

\subsection{Disconnected Graphs}
In this subsection we discuss the possibilities of extending
Theorem \ref{thomo} to the case where $H$ is disconnected. We will
focus on graphs, i.e. the $k=2$ case.

When considering disconnected graphs, it is helpful to note that if
the $H'$-homomorphism problem has a sharp threshold for every component $H'$ of $H$,
then the $H$-homomorphism problem has a sharp threshold. In fact,
it is simply the smallest of the thresholds for its components.
To see this, note first that for each component $H'$, since the $H'$-homomorphism
problem has a sharp threshold, every tree or unicyclic graph must have a homomorphism to
$H'$.  $G_{n,p=c/n}$ a.s. has at most one component with more than one cycle
(i.e. a giant component). So a.s. $G_{n,p}$ is homomorphic to $H$ iff either (i)
there is no giant component or (ii) the giant component
is homomorphic to $H$.  Since the giant component is connected, it is homomorphic
to $H$ iff it is homomorphic to at least one component of $H$. That giant component will a.s.
be homomorphic to at least one component of $H$ iff it is a.s. homomorphic
to the one with the smallest satisfiability threshold.

We now show that if there is at least one graph $H$ such that Hypothesis
A does not hold for the $H$-homomorphism problem,
then there must be such a graph with two components: a
triangle and a graph that is triangle-free and not
3-colourable.

Assume that Hypothesis A does not hold for $H$.  That is, every unicyclic
graph has a homomorphism to $H$, and the $H$-homomorphism problem has a coarse threshold.

First note that a triangle is not homomorphic to any triangle-free
graph. Also, Lemma \ref{cycle-hom}(b) says that every unicyclic graph is homomorphic to a
triangle. So ``every unicyclic graph has a homomorphism to $H$" is
equivalent to ``$H$ contains a triangle''.

Since the $H$-homomorphism problem has a coarse threshold,
there is some component $H_i$ of $H$, such that the
$H_i$-homomorphism problem has a coarse threshold. By Theorem \ref{thomo},
$H_i$ has no triangle.

Let $H^1$ be the subgraph of $H$ which consists of all triangle-free
components and $H^2$ be the subgraph consisting of the remaining
components of $H$; we have argued that neither $H^1$ nor $H^2$ is
empty. Since each component $H_i$ of $H^2$ has a triangle, Theorem
\ref{thomo} implies that the $H_i$-homomorphism problem has a sharp
threshold. Therefore, the $H^2$-homomorphism problem has a sharp
threshold.

Suppose that $H^1$ is
$3$-colourable. Then every graph homomorphic to $H^1$
is also homomorphic to a triangle, and hence is homomorphic to $H^2$.
It follows that being homomorphic
to $H$ is equivalent to being homomorphic to $H^2$. But this contradicts
the facts that $H$-homomorphism has a coarse threshold and $H^2$-homomorphism
has a sharp threshold. Therefore $\chi(H^1)>3$.

Also, we know that $H^1$-homomorphism has a coarse threshold since $H^1$ is triangle-free.

Let $c_3(n)$ denote the 3-colourability threshold, and let $c'(n)$
be the $H^2$-homomorphism threshold.  Every 3-colourable graph is
homomorphic to a triangle and thus is homomorphic to $H^2$.
Therefore $c'(n)\geq c_3(n)$. For every $c<c'(n)$, $G_{n,p=c/n}$
a.s. has a homomorphism to $H^2$ and hence to $H$.  Since
$H$-homomorphism has a coarse threshold, there must be some
$c=c(n)>c'(n)$ for which $G_{n,p=c/n}$ is not a.s.
non-$H$-homomorphic.   With probability bounded away from zero, the
non-giant components of $G_{n,p=c/n}$ are trees and hence are
homomorphic to $H^1$ and $H^2$. Therefore a.s. the giant component
of $G_{n,p=c/n}$ is not $H^2$-homomorphic as otherwise $G_{n,p=c/n}$
would not be a.s. non-$H^2$-homomorphic. Therefore the giant
component must not be a.s. non $H^1$-homomorphic  as otherwise
$G_{n,p=c/n}$ would be a.s. non-$H$-homomorphic. Therefore
$G_{n,p=c/n}$ is not a.s. non-$H^1$-homomorphic.

Replacing $H^2$ by a triangle has the effect of replacing $c'(n)$ by $c_3(n)$.
Since this does not increase $c'(n)$, we still have some $c^*=c^*(n)>c'(n)$ for which
$G_{n,p=c^*/n}$ is not a.s. non-$H^1$-homomorphic. If $H^1$ is disconnected,
add some edges so that it is connected but remains triangle-free; call the resulting
subgraph $(H^1)'$, and call the resulting graph, i.e. $(H^1)'$ plus a triangle component, $H'$.
Any graph that is
$H^1$-homomorphic is $(H^1)'$-homomorphic and so $G_{n,p=c^*/n}$ is not a.s. non-$(H^1)'$-homomorphic.
Since $H^1$ is triangle-free, Theorem \ref{thomo} implies that $(H^1)'$-homomorphism
has a coarse threshold.  The range of this threshold  either includes $c^*(n)$
or lies higher than $c^*(n)$; either way, it contains a range of values that is
higher than $c_3(n)$.  In that range of values of $c$, $G_{n,p=c^*/n}$ a.s. has no
homomorphism to the triangle component of $H'$.  It follows that that range of values
lies within the range of a coarse threshold for $H'$.

Therefore, $H'$ has a triangle but $H'$-homomorphism has a coarse threshold
and so violates Hypothesis A.

So the question of whether there is any undirected graph $H$ for which the
$H$-homomorphism problem violates Hypothesis A is equivalent to the following:

\begin{question}\label{q1} Is there any triangle-free graph $H_1$ with $\chi(H_1)>3$
such that for some values of $n$ and some $c>c_3(n)$, $G_{n,p=c/n}$
is not a.s. non-$H_1$-homomorphic, where $c_3(n)$ is the threshold value of
$3$-colorability?
\end{question}

\subsection{Directed Graphs}\label{sdg}
Earlier in this section, we discussed whether there exist any connected graphs $H$
for which the $H$-homomorphism problem violates Hypothesis A.  Now we turn our
attention to directed graphs.
We provide an example of a disconnected directed graph $H$ which
comes close to violating Hypothesis A.  It has the properties:
\begin{enumerate}
\item every unicyclic digraph has a homomorphism to $H$, and

\item the $H$-homomorphism problem under the $\csph$ model has a
coarse threshold.
\end{enumerate}

This does not actually violate Hypothesis A. The coarse
threshold is under the $\csph$ model rather than the $\csp$ model.

For a directed graph $D$, let $\tilde{D}$ denote the undirected
graph that is obtained from $D$ by removing the directions from the
edges and then replacing each double edge by a single edge. We define $D_{n,p}$ to be the random digraph on $n$
vertices where each of the $n(n-1)$ potential directed edges is
present with probability $p$.  Thus $D_{n,p}$ possibly
contains both edges $uv$ and $vu$ for some pair of vertices $v,u$,
i.e. a 2-cycle;  in fact, if $p=c/n$ for a constant $c$, then it
is straightforward to show that the probability that $D$ contains
at least one 2-cycle is $\z+o(1)$ for some constant $\z=\z(c)<1$.
This is the reason that we need to use the $\csph$ model rather than the $\csp$ model;
the digraphs formed by the $\csp$ model cannot have any 2-cycles since they
cannot have more than one constraint on the same pair of variables.

$H$ consists of a specific connected digraph $H_1$, defined below, and a
pair of vertices $u_1,u_2$, where the edges $u_1,u_2$ and
$u_2,u_1$ are both present.
There are no edges between
$\{u_1,u_2\}$ and $H_1$.
$H_1$ has the following properties:

\begin{itemize}
\item[(i):] every unicyclic digraph which does not contain a
2-cycle a.s. has a homomorphism to $H_1$; and
\item[(ii):]  For some $c_1>1/2$,
$D_{n,p=c_1/n}$ is not a.s. non-$H_1$-homomorphic.
\end{itemize}

It is easy to see that any unicyclic digraph, whose cycle is a
2-cycle, has a homomorphism to the 2-cycle.  By (i), every other
unicyclic digraph has a homomorphism to $H_1$.  Thus, every
unicyclic digraph has an $H$-homomorphism, as claimed. We will
show that, for every $1/2<c<c_1$, $D_{n,p=c/n}$ is neither a.s.
$H$-homomorphic nor a.s. non-$H$-homomorphic.  Thus, we have a
coarse threshold.  Condition (ii) above implies the latter, so we
just need to prove the former.

The graph $\tilde{D}$ for $D=D_{n,p}$, a.s. has a
giant component, since it is equivalent to $G_{n,p}$ where
$p=1-(1-\frac{c}{n})^2=\frac{2c}{n}-o(\inv{n})$ and $2c>1$. The number of 2-cycles
in $D$ are easily shown to have a Poisson distribution with constant mean,
using the same analysis as in, eg., Section 3.3 of \cite{jlr}.  Also, each edge of $\tilde{D}$ is
equally likely to correspond to a 2-cycle in $D$. Therefore, with probability bounded away from
zero, $D$ has a 2-cycle which lies in the giant component of $\tilde{D}$.
It is not hard to
see that a.s. if $D$ has such a 2-cycle then there is no $H$-homomorphism: That 2-cycle must
be mapped onto $u_1,u_2$. Since $H$ has no edges between $H_1$ and
$\{u_1,u_2\}$, any vertex that can be reached in $\tilde{D}$ from
that 2-cycle must also be mapped onto $u_1$ or $u_2$. So the
entire giant component must be mapped onto $\{u_1,u_2\}$. It is well-known that
a.s. a giant component is not 2-colourable and hence has an odd cycle.
(This follows, eg. from the facts that: (i) $\tilde{D}$ is a.s. not 2-colourable
and (ii) with probability at least some positive constant, all components of $\tilde{D}$
other than the giant one are trees and hence 2-colourable.)
Thus a.s. $\tilde{D}$ has an odd cycle and no odd cycle
can be mapped onto a 2-cycle. Therefore, $D$ is not a.s.
$H$-homomorphic.

It remains only to prove the existence of some $H_1$ satisfying
(i), (ii). We will choose $H_1$ to be a tournament (i.e. for every pair
of vertices, exactly one of the possible edges between them is
present) which contains every loopless 2-cycle-free directed graph on $k_0$ vertices as
a subgraph where $k_0$ is a constant defined below.  This can be done trivially if
$|H_1|\geq k_02^{k_0\choose2}$; simply place each tournament on $k_0$ vertices
on a different set of vertices of $H_1$.

For an undirected graph $G$ which does not contain any multiple
edges, the \emph{oriented chromatic number} $\chi_o$ of $G$ is the
minimum number $k$ such that every loopless 2-cycle-free directed graph $D$ satisfying
$\tilde{D}=G$ is homomorphic to a loopless 2-cycle-free directed graph $H$ with at most
$k$ vertices. The \emph{acyclic chromatic number} of a graph $G$
is the least integer $k$ for which there is a proper coloring of
the vertices of $G$ with $k$ colors in such a way that every cycle
of $G$ contains at least $3$ different colors. It was proved in
\cite{Raspaud} that if the acyclic chromatic number of a graph $G$
is at most $k$, then its oriented chromatic number is at most
$k\times2^{k-1}$. We also need the following:

\begin{lemma}
\label{acyclic} There exists a number $c>1$ such that a.s. the
acyclic chromatic of $G_{n,p=c/n}$ is at most $5$.
\end{lemma}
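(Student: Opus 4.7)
The plan is to take $c = 1+\epsilon$ for sufficiently small $\epsilon>0$ and exhibit an acyclic $5$-coloring of $G=G_{n,p=c/n}$ that works whp. First, I would decompose $G$ into (i) the non-giant components, (ii) the $2$-core $K_2$ of the giant component, and (iii) the forest of pendant trees hanging off $K_2$ inside the giant component. The non-giant components of $G$ are a.s. either trees or unicyclic and admit acyclic $3$-colorings directly: trees need only $2$ colors, and a unicyclic graph can be $2$-colored on its spanning tree (delete one cycle edge) and then, if the cycle has become bichromatic, one cycle vertex recolored with a third color. Given any acyclic coloring of $K_2$ from $\{1,\ldots,5\}$, one extends it to each pendant tree rooted at $v\in K_2$ by $2$-coloring the tree with colors different from $v$'s color and from the colors of $v$'s neighbors in $K_2$, which prevents the creation of any new bichromatic cycle.

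The main task is thus to acyclically $5$-color $K_2$. For $c = 1+\epsilon$ the $2$-core has average degree close to $2$, and its \emph{kernel} $K$ (the multigraph obtained by suppressing every degree-$2$ vertex of $K_2$) whp has only $\Theta(\epsilon^3 n)$ vertices, each of degree at least $3$; moreover, every subdivision path of $K_2$ replacing an edge of $K$ has length growing with $1/\epsilon$. If I can acyclically $5$-color $K$, then the coloring extends to $K_2$ by coloring each subdivision path with an alternating $3$-color pattern matching the endpoint constraints: any would-be bichromatic cycle in $K_2$ must traverse some subdivision path, but that path uses at least $3$ distinct colors, contradiction.

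The hardest step is therefore acyclically $5$-coloring the kernel $K$. Here I would exploit the sparsity of $K$: for $\epsilon$ small enough, $K$ is whp ``structurally simple'' in the sense that the expected number of copies of any given dense fixed subgraph tends to $0$. One natural route is to show that $K$ is whp planar, whereupon Borodin's theorem (planar graphs have acyclic chromatic number at most $5$) immediately yields an acyclic $5$-coloring of $K$; an alternative is to argue that $K$ is whp essentially $3$-degenerate outside a small exceptional set and build the coloring greedily, with the exceptional set absorbed using the two remaining colors. Carrying out this structural analysis of $K$, via first-moment bounds on the expected number of ``bad'' subgraphs for $\epsilon$ chosen small enough, is the main technical hurdle; the remaining bookkeeping (subdivision paths, pendant forests, non-giant components) then combines to give the required acyclic $5$-coloring of the entire $G$.
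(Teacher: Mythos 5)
There is a genuine gap at the heart of your argument: the step you yourself identify as hardest, acyclically $5$-colouring the kernel $K$, is not actually carried out, and both routes you sketch for it fail. The planarity route is based on a false statement: for any fixed $c>1$ the random graph $G_{n,p=c/n}$ is a.s.\ non-planar, and the non-planarity lives precisely in the kernel (a multigraph of minimum degree $3$ on $\Theta(\epsilon^3 n)$ vertices, which a.s.\ contains $K_{3,3}$-subdivisions), so Borodin's theorem is unavailable. The degeneracy route is unsubstantiated: greedy colouring along a degeneracy order controls only proper colourings, while the obstruction to acyclic colouring --- bichromatic cycles --- is global, and bounded degeneracy does not by itself bound the acyclic chromatic number; no mechanism is offered for avoiding bichromatic cycles inside $K$, whose maximum degree is moreover unbounded. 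A secondary inaccuracy: it is not true that a.s.\ \emph{every} subdivision path of the $2$-core has length growing with $1/\epsilon$; a positive proportion (about an $\epsilon$ fraction) of the kernel edges correspond to short paths, so the ``every path uses $3$ colours'' extension step also needs repair.

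The paper's proof avoids colouring the kernel with few colours altogether. It cites a structural result (from the proof of Lemma 6 of \cite{mm2}): there exists $c>1$ such that a.s., after deleting the \emph{internal} vertices of all pendant paths of length at least $4$ in $G_{n,p=c/n}$, every remaining component is a tree or unicyclic. Trees and unicyclic graphs are acyclically $3$-colourable, and the deleted vertices are then coloured with $2$ reserved colours so that any cycle passing through a deleted path interior necessarily sees at least $3$ colours, while any other cycle lies inside a $3$-coloured tree/unicyclic component. This is the idea your outline is missing: rather than trying to $5$-colour the dense part of the $2$-core, one exploits the near-critical structure so that the long degree-$2$ paths, given two dedicated colours, automatically break all potential bichromatic cycles, and only trivially colourable pieces remain. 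To make your approach rigorous you would either need to import such a structural lemma or find a genuinely new argument for the kernel, since both of your proposed shortcuts are dead ends.
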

\begin{proof}
Let $G=G_{n,p}$. A \emph{pendant} path in $G$ is a path in which
no vertices other than the endpoints lie in any edge of the graph
off the path. It was proven in \cite{mm2} (see the proof of Lemma 6)
that there exists $c>1$ such that a.s.
after removing the internal vertices of pendant paths of length at
least $4$ from $G$ every component is either a tree or it is
unicyclic. One can use $3$ colors to color the vertices in these
components and then use $2$ other colors to color the removed
vertices such that every cycle in $G$ is colored by at least $3$
colors.
\end{proof}

When $D=D_{n,p=c_1/n}$ every edge is present in
$\tilde{D}$ with probability $2p-p^2$ and independent of the other
edges. Thus, if $c_1=c/2$, where $c$ is the constant
obtained from Lemma~\ref{acyclic}, a.s. the acyclic chromatic
number of $\tilde{D}$ is at most 5 and so $\chi_o(\tilde{D})\leq 5\times 2^4=k_0$.
Therefore, a.s. if $D$ is 2-cycle-free then $D$ is homomorphic to some
loopless 2-cycle-free digraph $H$ on at most $k_0$ vertices.  Since every such $H$ is a subgraph
of $H_1$, this would mean $D$ is homomorphic to $H_1$. Since $D$ does not a.s. have
a 2-cycle, this establishes that $H_1$ satisfies (ii).

\section{The $(d,k,t)$-model}\label{sdkt}

In this section, we prove Theorem \ref{tdkt} which says that
the $(d,k,t)$-model has a sharp threshold whenever $d,k\geq2$ and $1\leq t<d^{k-1}$;
i.e. whenever $d,k,t$ are not such that the model is a.s. unsatisfiable for all $c>0$.

{\bf Proof of Theorem \ref{tdkt}} Suppose that the $(d,k,t)$-model
exhibits a coarse threshold. Then consider $p,\a,\e$ and $M$ as
guaranteed by Corollary~\ref{toolcsp}. As in the proof of Theorem
\ref{thomo}, we will find some hypertree $T$ such that adding $T$ to
the random CSP increases the probability of unsatisfiability by a
constant. This time, adding $T$ may not boost the probability as
much as $M$ does; but it will boost it by a small constant, and that
will be enough.

Gent et.al.\cite{gmp} proved that for $k=2$:
if $t<d^{k-1}$ and $M$ is unicyclic, then $M$ is satisfiable
(see their Theorem 1 and Corollaries 1 and 2). Their argument easily
extends to the case where $k\geq3$. So we can assume that $M$ is satisfiable.
Suppose that $V(M)=u_1,...,u_r$ and let $a_i$ be the value of
$u_i$ in some particular satisfying assignment $A$ of $M$. Given a
CSP $F$ on at least $r$ variables, we define $F\oplus A$ to be the
CSP formed by choosing a random ordered $r$-tuple of variables
$v_1,...,v_r$ in $F$ and for each $1\leq i\leq r$, forcing $v_i$
to take the value $a_i$ by adding a one-variable constraint on
$v_i$. Clearly the probability that $\csph\oplus A$ is
unsatisfiable is at least as high as the probability that
$\csph\oplus M$ is unsatisfiable.

Lemma~\ref{local} implies that there exists some constant $L$ such that,
defining $E_1$ to be the event that ``every $v_i$ has at most $L$
neighbours and no hyperedge contains two $v_i,v_j$'',
$\pr(E_1)\geq 1-\frac{\a}{2}$. Suppose that $E_1$
holds.

Consider a particular $v_i$ and expose the $\ell\leq L$
edges containing it, $e_1,...,e_{\ell}$ and the corresponding
constraints $C_1,...,C_{\ell}$.  For each $C_j$, let $C'_j$ be the
$(k-1)$-variable constraint obtained by restricting $v_i$ to be
$a_i$; i.e. a $(k-1)$-tuple of values is permitted for $C'_j$ iff
$C_j$ permits that same $(k-1)$-tuple along with $v_i=a_i$.  Thus,
we can remove $C_j$ and add the constraint $C_j'$ on the $k-1$ other variables.
After doing so for every $C_j$, we can remove the restriction that $v_i=a_i$,
since $v_i$ no longer lies in any constraints. (Note that, since $E_1$ holds,
no constraint will contain some pair $v_i,v_j$ and thus be reduced twice.)

It is useful now to consider choosing $\csph\oplus A$ by first selecting
the random variables $v_1,...,v_r$ and then choosing $\csph$. Thus, carrying
out the operation described in the previous paragraph is equivalent to,
for each $v_i$: expose $\ell$, choose $\ell$ random $(k-1)$-tuples of
variables from $V-\{v_1,...,v_r\}$; for each selected $(k-1)$-tuple,
choose a random $C_j$ with exactly $t$ restrictions
and place $C_j'$ on the $(k-1)$-tuple; then remove $v_i$.
Note that since $C_j$ has $t$ restrictions, $C'_j$ has at most $t$ restrictions.

For convenience, we modify the experiment in a manner that increases
the probability of unsatisfiability.  For each $v_i$, instead of randomly exposing $\ell$,
we simply assume $\ell=L$; i.e. we choose $L$ random $(k-1)$-tuples.
Next, if $C_j'$ has fewer than $t$ restrictions, we add more restrictions
to it so that it has exactly $t$ restrictions.  Our final modification
is that instead of picking a random $(k-1)$-tuple and randomly selecting
$C_j'$ as described above, we choose ${d^{k-1}\choose t}$ random $(k-1)$-tuples
and place each of the ${d^{k-1}\choose t}$ possible
constraints on $k-1$ variables and with $t$ restrictions on one of the $(k-1)$-tuples.
The last modification may appear to be a bit of an overkill,
but it has the (minor) convenience that the added constraints are not
randomly selected.  Finally, we wish to do without the fact that $v_1,...,v_r$
are not permitted to be selected as members of the $(k-1)$-tuples.  So we choose the
$(k-1)$ tuples randomly from amongst all vertices and let $E_2$
be the event that none of them use any of
the vertices in $v_1,...,v_r$.  Since $r=O(1)$ and we are choosing a total
of $O(1)$ $(k-1)$-tuples, $\pr(E_2)=1-O(n^{-1})$.

So we let $G$ be a random CSP formed as follows: start with a random
$\csph$ and then for each of the ${d^{k-1}\choose t}$ possible
constraints on $k-1$ variables and with $t$ restrictions, choose
$rL$ random ordered $(k-1)$-tuples of variables and place that
constraint on them.
$\pr(G \mbox{ is unsatisfiable}|E_2)\geq \pr(\csph\oplus A\mbox{ is unsatisfiable }$,
as described in the preceding paragraph. Since $\pr(E_2)=1-o(1)$, this
implies that adding those ${d^{k-1}\choose t}rL$ constraints boosts the
probability of unsatisfiability by at least $2\a-o(1)$. We say that a
collection of ${d^{k-1}\choose t}rL$  $(k-1)$-tuples is
{\em bad} if adding the constraints to that set results in an
unsatisfiable CSP. So, consider the following random experiment:
pick a random $\csph$ and then pick ${d^{k-1}\choose t}rL$ ordered
$(k-1)$-tuples of the variables. The probability that we pick a
bad collection is at least $2\a$. Since ${d^{k-1}\choose t}rL=O(1)$, a
simple first moment calculation shows that a.s. the choice of
$(k-1)$-tuples will be vertex disjoint.  Thus, the probability of
picking a bad collection is at least $2\a-o(1)$ even if we condition on
the $(k-1)$-tuples being vertex-disjoint.

Now we are finally ready to define our hypertree $T$ as follows:
(i) take the hypergraph consisting of a vertex
$v$ lying in $rL{d^k\choose t}$ edges where no other vertex lies
in more than one of the edges (i.e. a star), and (ii) place each of the
${d^k\choose t}$ possible $(d,k,t)$-constraints on $rL$ of the
edges.  This, of course, is where we take advantage of the fact that
we are using the $(d,k,t)$-model.

Now consider adding a copy of $T$ to $\csph$.
For each $1\leq\d\leq d$, let $T_{\d}$ denote the
collection of $(k-1)$-tuples obtained by removing $v$ from every
edge of $T$ that contains a constraint in which every restriction has
$v=\d$; note that $T_{\d}$ consists of $rL$ copies of every constraint
on $k-1$ variables with exactly $t$ restrictions and so $|T_{\d}|={d^{k-1}\choose t}rL$.
The probability that
for each $1\leq\d\leq d,$ $T_{\d}$ is a bad set is at least
$(2\a-o(1))^d$.  Note that if every $T_{\d}$ is a bad set, then
the resulting CSP is unsatisfiable because setting $v=\d$ requires
the set of $(k-1)$-constraints on $T_{\d}$ to be enforced.

So by Lemma \ref{tree}, there is some $z=o(n^{1-k})$ such that
the probability that $\widehat{\CSP}_{n,p=p+z}({\cal P})$ is
unsatisfiable is at least $(2\a-o(1))^d$.  By considering adding
$x$ copies of $T$, we see that the probability that
$\widehat{\CSP}_{n,p=p+xz}({\cal P})$ is satisfiable is at most
$(1-(2\a-o(1))^d)^x$ which is less than $\a$ for some sufficiently
large constant $x$.  Since $z=o(n^{1-k})$, this implies that
$\pr(\widehat{\CSP}_{n,(1+\e)p}({\cal P})\mbox{ is
unsatisfiable})>1-\a$ which contradicts
Corollary~\ref{toolcsp}(b). \proofend

\section{Binary CSP's with domain size $3$}\label{s23}

Recall that the arguments from Istrate\cite{gi} and from Creignou
and Daud\'{e}\cite{cd2} can show that when the domain size $d=2$,
then Hypothesis A holds; i.e.  if every unicyclic CSP is
satisfiable, then $\csp$ has a sharp threshold.  This result does
not extend to $d=3$.  Consider the following example, with
$d=3,k=2$:

\begin{example}\label{ed3}
We have two constraints.  $C_1$ says that either both variables are equal
to 1, or neither is equal to 1.  $C_2$ says that the variables cannot both
have the same value.
$\calp(C_1)=\frac{2}{3},\calp(C_2)=\frac{1}{3}$.
\end{example}

Observe that every unicyclic CSP that uses only constraints
$C_1,C_2$ is satisfiable.

Consider any $\frac{3}{2}<c<3$. Thus, a.s. the sub-CSP formed
by the $C_1$ constraints has a giant component,
and the sub-CSP formed by the $C_2$ constraints does not.
We will show that $\csp$
is neither a.s. satisfiable nor a.s. unsatisfiable.

To see that it is not a.s. unsatisfiable, note that the subgraph
induced by the $C_2$ constraints is 2-colourable with probability
at least some positive constant. This follows from the well known
fact\cite{er} that for $c<1$ the random graph $G(n,\frac{c}{n})$ is
a forest with probability at least some positive constant. If
it is 2-colourable, then we can satisfy all the $C_2$ constraints
by assigning every variable either 2 or 3; this will not violate
any $C_1$ constraints.

To see that it is not a.s. satisfiable, note that the subgraph formed
by the $C_1$ constraints has a giant component $T$. So either
every variable in $T$ is assigned 1 or none of them are. A.s. at
least one $C_2$ constraint has both variables in $T$, and so they
cannot both be assigned 1. Thus, a.s. no variables in $T$ can be
assigned 1. This implies that if the $C_2$ constraints form an odd
cycle using variables of $T$ then the CSP is not satisfiable. That
event occurs with probability at least some positive constant, because the
graph formed by the $C_2$ constraints that use only variables of $T$
is $G_{n',\frac{1/3}{n'}}$ where $n'=|T|=\Theta(n)$ and
so it is not 2-colourable with probability at least some positive constant.

It is instructive to look at this example in light of Corollary
\ref{toolcsp}. Here, the subgraph $M$ is a triangle whose edges are
all $C_2$ constraints. If $M$ appears, then at least one of its
variables must be assigned the value 1. However, with probability at
least some $\z>0$, the remainder of the CSP has a structure implying
that if at least one of those variables has the value 1 then some
specific set of $\Theta(n)$ other variables all must be assigned the
value 1.  But a.s. there is a $C_2$ constraint between at least two
of those variables and thus they can't all be assigned 1. This
enables the appearance of $M$ to boost the probability of
unsatisfiability by at least some positive constant.

The main result of this section shows that when $d=3$ and $k=2$,
if Hypothesis A fails on some model, then there must be an $M$
as in Corollary \ref{toolcsp}
whose presence boosts the probability of unsatisfiability
for essentially the same reason as that in Example \ref{ed3}.

Before presenting our theorem, we begin with a few preliminaries.
To simplify, we will take $k=2$.

Consider a CSP where every constraint is on 2 variables.
Suppose there is some constraint on variables $v,u$ which
implies that if $v$ is assigned $\d$ then $u$ must be assigned $\g$;
we say that
$v:\d\rightarrow u:\g$. Moreover if there is a sequence
of variables $v_1,\ldots,v_r$ and values
$\d_1,\ldots,\d_r$ such that $v_i:\d_i\rightarrow v_{i+1}:\d_{i+1}$
for $i=1,\ldots,r-1$ then we say that $v_1:\d_1\rightarrow v_r:\d_r$.





For each variable $v$ and each pair of (possibly equal) values $\d,\g$
we define $F_{\d,\g}(v)$ to be the set of variables $u$ such that
$v:\d\rightarrow u:\g$, and we define $F_{\d}(v)=\cup_{1\leq\g\leq d}F_{\d,\g}(v)$.
Thus $F_{\d}(v)$ is the set of variables $u$ such that if $v$ is assigned $\d$
then there is {\em a path of constraints} which imply that $u$ must be assigned
a particular value.  Assigning $\d$ to $v$  may force assignments to
other variables $w$ via a combination of more than one path of constraints.
But the locally tree-like nature of $\csp$ will imply that such variables
are not a significant concern.

In $\csp$, we can expose $F_{\d}(v)$ by using a simple breadth-first
search from $v$.  This allows us to analyze the distribution of the
size of $F_{\d}(v)$ and $F_{\d,\g}(v)$ using a standard
branching-process analysis (see e.g.\ Chapter 5 of \cite{jlr}).
Straightforward branching-process arguments yield:

\begin{lemma}\label{l231} With the exception of at most $d^2$ constants $c$,
if $p=(c+o(1))/n$ then for every pair $\d,\g$,
one of these cases holds:
\begin{enumerate}
\item[(a)] $\ex(|F_{\d,\g}(v)|)=O(1)$; or
\item[(b)] $\ex(|F_{\d,\g}(v)|)=\Theta(n)$.
\end{enumerate}
\end{lemma}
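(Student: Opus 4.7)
The plan is to analyse $F_{\d,\g}(v)$ by coupling its BFS exploration in $\csph$ with a multi-type Galton-Watson branching process on $d$ types, where type $\g$ represents ``currently forced to value $\g$''. For a frontier variable $u$ of type $\g$, each ordered pair $(u,w)$ receives each constraint $C\in\supp(\calp)$ independently with probability $\calp(C)\cdot p/2$; the pair contributes a child of type $\g'$ exactly when the assigned constraint forces $w$ to value $\g'$ given $u=\g$. Summing over $w$ and $C$, the expected number of type-$\g'$ children of a type-$\g$ parent equals $cM_0[\g,\g'] + o(1)$ for an explicit $d\times d$ matrix $M_0$ depending only on $\calp$; hence the BP mean matrix is $M(c)=cM_0$.

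By Lemma~\ref{local}, the constant-radius neighbourhood of $v$ is a.s.\ a forest of bounded size, so during the early, tree-like stages of the BFS the coupling is exact, and the standard comparison (cf.\ Chapter 5 of \cite{jlr}) gives
\[
\ex(|F_{\d,\g}(v)|) \;=\; \mathbf{1}[\d=\g] \;+\; \sum_{k\geq 1}\bigl((cM_0)^k\bigr)_{\d,\g} \;+\; \text{(lower-order)},
\]
whenever the right-hand series converges. Decompose the types into the strongly connected components (SCCs) of the digraph $\{(\g,\g'):M_0[\g,\g']>0\}$. The Neumann series for $(\d,\g)$ converges iff every SCC which is reachable from $\d$ \emph{and} from which $\g$ is reachable has spectral radius (of the corresponding submatrix of $cM_0$) strictly less than $1$. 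Since each such spectral radius is $c$ times a fixed constant, each pair $(\d,\g)$ yields a unique critical value $c^*(\d,\g) = 1/\rho^*(\d,\g)$ (where $\rho^*(\d,\g)$ is the largest relevant spectral radius, when positive). There are $d^2$ ordered pairs, giving at most $d^2$ exceptional values of $c$ in total.

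For $c$ away from these critical points, either $c<c^*(\d,\g)$, in which case the Neumann series converges and $\ex(|F_{\d,\g}(v)|)=O(1)$; or $c>c^*(\d,\g)$, in which case the relevant block of $cM_0$ is strictly supercritical. The main obstacle is the supercritical case, where one must upgrade the divergence of the idealised BP into an actual $\Theta(n)$ lower bound on $\ex(|F_{\d,\g}(v)|)$. I would follow the usual route: run the BFS up to a threshold of $\e n$ explored variables, maintaining Lemma~\ref{local}-style control of cycles, and show that with probability at least the BP survival probability (which is bounded away from $0$ by classical multi-type BP theory applied to the supercritical irreducible block) the BFS reaches this threshold. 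The Perron-Frobenius eigenvector of the supercritical block guarantees that a positive fraction of the surviving exploration carries type $\g$ (since $\g$ lies in a reachable SCC by hypothesis), yielding the $\Theta(n)$ lower bound; the matching $O(n)$ upper bound is immediate. The mild $o(1)$ slack in the hypothesis $p=(c+o(1))/n$ causes no trouble because $\rho(cM_0)$ depends continuously on $c$ and we only avoid exact crossings.
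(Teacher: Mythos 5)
Your proposal is correct and takes essentially the same route the paper intends: the paper omits the proof, describing it as a standard branching-process analysis of the BFS exposure of $F_{\d,\g}(v)$ with the $d^2$ exceptional constants being exactly the critical points for each pair $(\d,\g)$, and your multi-type mean-matrix/SCC spectral-radius computation is precisely that argument carried out. (One cosmetic caveat: when $\g$ lies outside the supercritical SCC, the $\Theta(n)$ many type-$\g$ vertices come from bounded-length forcing paths leaving that SCC rather than directly from the Perron--Frobenius eigenvector, but this is the same standard estimate and does not affect the conclusion.)
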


We omit the standard proofs.  We remark only that those $d^2$ constants  are
the so-called {\em critical points} for each $F_{\d,\g}$.

We say that $F_{\d,\g}$ is {\em subcritical} if case (a) holds and {\em supercritical}
if case (b) holds. We say that $F_{\d}$ is {\em supercritical}
if $F_{\d,\g}$ percolates for at least one $\g$, and is {\em subcritical} otherwise.
Markov's Inequality immediately implies:

\begin{lemma}\label{l232}
\begin{enumerate}
\item[(a)] If $F_{\d,\g}$ is subcritical, then for every $\xi>0$
there is a constant $L$ such that\\ $\pr(|F_{\d,\g}(v)|\leq L)>1-\xi$.
\item[(b)] If $F_{\d,\g}$ is supercritical, then there are constants $\z,\b>0$
such that $\pr(|F_{\d,\g}(v)|\geq \b n)>\z$.
\end{enumerate}
\end{lemma}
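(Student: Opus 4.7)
The plan is to deduce both parts directly from Markov's inequality (applied in opposite directions), using only the first-moment information already supplied by Lemma~\ref{l231}; no additional branching-process analysis is needed beyond what has already been carried out to establish that dichotomy.

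For part (a), if $F_{\d,\g}$ is subcritical then Lemma~\ref{l231}(a) guarantees a constant $C=C(c,\calp)$ with $\ex(|F_{\d,\g}(v)|)\leq C$. Given $\xi>0$, set $L:=\lceil C/\xi\rceil$ and apply the ordinary upper Markov bound
\[
\pr(|F_{\d,\g}(v)|>L) \;\leq\; \frac{\ex(|F_{\d,\g}(v)|)}{L}\;\leq\;\xi,
\]
which rearranges to $\pr(|F_{\d,\g}(v)|\leq L)\geq 1-\xi$, as required.

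For part (b), I would use the ``reverse Markov'' observation that $X:=|F_{\d,\g}(v)|$ is deterministically at most $n$, since $F_{\d,\g}(v)$ is a subset of the $n$ variables of the CSP. Lemma~\ref{l231}(b) supplies a constant $\mu>0$ with $\ex(X)\geq\mu n$ for all sufficiently large $n$. For any $\b\in(0,\mu)$, splitting the expectation according to whether $X<\b n$ or $X\geq\b n$ gives
\[
\mu n \;\leq\; \ex(X) \;\leq\; \b n\cdot\pr(X<\b n)+n\cdot\pr(X\geq\b n) \;\leq\; \b n+n\cdot\pr(X\geq\b n),
\]
so $\pr(X\geq\b n)\geq\mu-\b$. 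Taking $\b:=\mu/2$ produces the conclusion of (b) with $\z:=\mu/2$ and the same value of $\b$.

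The only step worth flagging is the deterministic bound $|F_{\d,\g}(v)|\leq n$, which is what allows the reverse-Markov direction to go through; without it one would be forced to invoke a genuine branching-process survival argument for the multitype BFS exploration of $F_{\d,\g}(v)$. Since that bound is automatic from the definition, there is no real obstacle here, and this is precisely the sense in which ``Markov's Inequality immediately implies'' the lemma.
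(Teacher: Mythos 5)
Your proof is correct and follows exactly the route the paper intends: the paper gives no written proof beyond the remark that Markov's inequality immediately implies the lemma from the expectation dichotomy of Lemma~\ref{l231}, and your two applications (ordinary Markov for (a), and the averaging/reverse-Markov step for (b) using the trivial bound $|F_{\d,\g}(v)|\leq n$) are precisely the intended argument. Nothing further is needed.
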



We can now state the main result of this section:

\begin{theorem}\label{t23} Consider some $\calp$ with
$d=2,k=3$ such that  $\csp$ has a coarse threshold and every unicyclic CSP formed from
$\supp(\calp)$ is satisfiable. Then there exist $p,\a,\e,M$
as in Corollary \ref{toolcsp} such that for some value
$1\leq\d\leq3$ we have:
\begin{enumerate}
\item[(a)] $M$ cannot be satisfied using only the two values other
than $\d$; and
\item[(b)] $F_{\d,\d}$ is supercritical.
\end{enumerate}
\end{theorem}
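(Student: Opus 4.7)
The plan is a proof by contradiction paralleling the general strategy of Theorems \ref{tdkt} and \ref{thomo}. Invoke Corollary \ref{toolcsp} to obtain $p,\a,\e$ and a unicyclic CSP $M$ on a constant number of variables; the hypothesis that every unicyclic CSP over $\supp(\calp)$ is satisfiable guarantees that $M$ is satisfiable. Suppose the conclusion fails for this $M$: for every $\d\in\{1,2,3\}$, either ($\neg$a) $M$ has a satisfying assignment whose range omits $\d$, or ($\neg$b) $F_{\d,\d}$ is subcritical. The goal is to contradict Corollary \ref{toolcsp}(c) by showing that under this assumption
\[
\Pr[\csph \oplus M \text{ unsat}] \;\le\; \Pr[\csph \text{ unsat}] + o(1).
\]

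Let $S\subseteq\{1,2,3\}$ be the set of values appearing in every satisfying assignment of $M$. For each $\d\in S$ clause ($\neg$a) is false, so ($\neg$b) holds and $F_{\d,\d}$ is subcritical; for each $\d\notin S$ there is a satisfying assignment of $M$ avoiding $\d$. I would first argue, by a small combinatorial manipulation on the satisfying assignments of $M$, that there is a satisfying assignment $A^*$ of $M$ whose range is contained in $S$; write $\d_i=A^*(u_i)$ where $V(M)=\{u_1,\ldots,u_r\}$, and let $v_1,\ldots,v_r$ be the random planting vertices in $\csph$. The objective is then, given a satisfying assignment $A$ of $\csph$, to locally modify $A$ near each $v_i$ to a satisfying assignment $A'$ of $\csph$ with $A'(v_i)=\d_i$; combining $A'$ with $A^*$ yields a satisfying assignment of $\csph\oplus M$.

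The local modification is driven by Lemma \ref{local} and Lemma \ref{l232}(a). Apply Lemma \ref{l232}(a) to each subcritical $F_{\d,\d}$, $\d\in S$, to obtain a constant $L$ such that $|F_{\d,\d}(v)|\le L$ with probability at least $1-\xi/r$ for an arbitrary fixed $\xi>0$; then apply Lemma \ref{local} at a radius $R$ large enough to contain all these propagation sets, so that with probability $1-o(1)$ the $R$-neighborhoods of $v_1,\ldots,v_r$ are disjoint trees not touching any other $v_j$. Conditioning on this event, for each $i$ with $A(v_i)\neq\d_i$ we flip $v_i$ to $\d_i$ and propagate the forced changes through the tree neighborhood; subcriticality of $F_{\d_i,\d_i}$ (which applies since $\d_i\in S$) keeps the cascade inside the tree, and the disjointness of neighborhoods prevents cross-interference. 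Taking $\xi<\a/2$ then gives the promised bound, contradicting Corollary \ref{toolcsp}(c).

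The hard part will be handling the cascades cleanly. Setting $v_i=\d_i$ may force nearby variables via \emph{off-diagonal} relations $F_{\d_i,\g}$ with $\g\neq\d_i$, for which we have no subcritical guarantee, so one must argue that within the tree neighborhood the forcing can be realized recursively and terminated at the boundary by leaving the leaves unchanged and compatible with $A$. I would carry this out by processing the tree from $v_i$ outward, using the fact that a tree-CSP with boundary values inherited from a globally satisfying $A$ is itself satisfiable (because each leaf constraint either was already consistent with $A$ or lies along the flipping cascade confined to $F_{\d_i,\d_i}$). A secondary obstacle is verifying the existence of an $A^*$ with range inside $S$; I would handle this by a direct check on $M$'s satisfying assignments based on $|S|$. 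Once these two points are settled, the contradiction is immediate.
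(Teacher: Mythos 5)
Your central step is the bound $\Pr[\csph\oplus M\mbox{ is unsatisfiable}]\le\Pr[\csph\mbox{ is unsatisfiable}]+o(1)$, and this is where the proposal breaks: that bound is stronger than anything the paper proves, and it is not attainable here. Even in the favourable case where every value $\d$ used by the planted assignment has the \emph{entire} $F_{\d}$ subcritical, planting $v_i=\d_i$ genuinely forces every variable of $F_{\d_i}(v_i)$ to a specific value and hence forces each boundary variable $u\in U$ to avoid certain values; with probability bounded away from zero these induced one-variable constraints already make the instance unsatisfiable, so $M$ can boost unsatisfiability by a constant and no local repair can undo that (this is exactly the mechanism of Example \ref{ed3}). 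Accordingly, the paper's subcritical case does not argue for an $o(1)$ boost at all: it converts the planted assignment into $\ell=O(1)$ one-variable constraints ``$u_i$ cannot take $\d_i$'' on $U$, and then uses the Erd\H{o}s--Simonovits lemma (Lemma \ref{les}, the Friedgut--Krivelevich/Alon booster) to show that $O(1)$ \emph{random} constraints from $\calp$ achieve a comparable constant boost, which is what produces the contradiction with Corollary \ref{toolcsp}. This entire second half is missing from your plan, and your proposed fix for off-diagonal forcing (``terminate at the boundary by leaving the leaves unchanged and compatible with $A$'') cannot substitute for it: when a forcing chain from $v_i:\d_i$ reaches the boundary of the radius-$R$ ball, the boundary vertex's value is forced, not chosen, so you cannot keep it equal to $A$'s value there.

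Your case split via $S$ is also not the right dichotomy, on both sides. For $\d\in S$ you only learn that the diagonal $F_{\d,\d}$ is subcritical, which does not contain the cascade: some $F_{\d,\g}$ with $\g\ne\d$ may be supercritical, in which case $|F_{\d}(v_i)|=\Theta(n)$ with probability bounded away from zero and no constant-radius tree neighbourhood from Lemma \ref{local} covers the forced set. The paper's subcritical case assumes the full $F_{\d}$ is subcritical for every value used; when instead some relevant $F_{\d}$ is supercritical, the paper does not seek a contradiction but proves the theorem directly (its Case 1), exploiting the freedom to replace $M$ by an augmented unicyclic $M'\supseteq M$ (which still satisfies Corollary \ref{toolcsp}): it uses the bad-value preprocessing of \cite{mm1} and Lemma \ref{l233} to attach forcing chains that convert an off-diagonal supercritical $F_{\d,\g}$ into a diagonal supercritical $F_{\m,\m}$ which every satisfying assignment of the augmented $M$ must use. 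Your proposal never augments $M$ and never addresses bad values, yet both are essential. Finally, the auxiliary claim that some satisfying assignment $A^*$ has range contained in $S$ is false in general: a single constraint whose allowed pairs are exactly $(1,2)$ and $(3,1)$ has $S=\{1\}$ but no satisfying assignment using only the value $1$, so the planned ``direct check based on $|S|$'' cannot be carried out.
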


E.g., in Example \ref{ed3} we can take $M$ to be a triangle of $C_2$
constraints and $\d=1$.

This theorem does not
extend to $d=4,k=2$ nor $d=3,k=3$; in each of these cases, we have counterexamples.
Before proving our theorem, we start with a helpful lemma.




\begin{lemma}\label{l233} Suppose $p=(c+o(1))/n$ where $c$ is not one
of the 9 exceptional constants from Lemma \ref{l231}. If $F_{\d}$ is supercritical then
either (i) $F_{\d,\d}$ is supercritical or (ii) there is some $\m$ such that $F_{\m,\m}$
is supercritical and there is a sequence of constraints in $\supp(\calp)$
through which $v:\d\rightarrow u:\m$.
\end{lemma}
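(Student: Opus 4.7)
The plan is to carry out the branching-process analysis behind Lemma \ref{l231} at the level of values rather than individual variables. Define the non-negative $d\times d$ matrix $A$ whose entry $A_{\delta,\gamma}$ is the expected number of neighbours $u$ of $v$ in $\csph$ for which the constraint on $\{v,u\}$ directly forces $u=\gamma$ whenever $v=\delta$; this is a fixed constant depending on $c$ and $\calp$. Then $\ex[|F_{\delta,\gamma}(v)|] = (1+o(1))\sum_{r\geq 0}(A^r)_{\delta,\gamma}$ up to the point where the explored set reaches linear size, so $F_{\delta,\gamma}$ is supercritical precisely when this series diverges.

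Let $\Gamma$ be the directed graph on $\{1,\ldots,d\}$ with an edge $\delta\to\gamma$ iff $A_{\delta,\gamma}>0$, equivalently iff some constraint in $\supp(\calp)$ directly implies $\delta\to\gamma$. Decompose $\Gamma$ into strongly connected components and let $\lambda(C)$ denote the Perron eigenvalue of $A$ restricted to each SCC $C$. Standard Perron--Frobenius theory for non-negative matrices says that $(A^r)_{\delta,\gamma}$ grows exponentially, and hence the series above diverges, iff some directed walk from $\delta$ to $\gamma$ in $\Gamma$ passes through an SCC $C$ with $\lambda(C)>1$. The $d^2$ exceptional constants of Lemma \ref{l231} are exactly the values of $c$ at which some $\lambda(C)=1$, and by hypothesis these are excluded, so every $\lambda(C)$ is bounded away from $1$.

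Now assume $F_\delta$ is supercritical, so some $F_{\delta,\gamma}$ is supercritical, and hence there is an SCC $C^*$ with $\lambda(C^*)>1$ reachable from $\delta$ in $\Gamma$. Pick any $\mu\in C^*$. Since $\lambda(C^*)>0$, the component $C^*$ contains a directed cycle through $\mu$, so there is a walk $\mu\to\cdots\to\mu$ in $\Gamma$ passing through $C^*$, and applying the criterion gives $F_{\mu,\mu}$ supercritical. If $\mu=\delta$ we are in case (i). Otherwise the walk $\delta\to\cdots\to\mu$ in $\Gamma$, interpreted edge by edge, yields a sequence of constraints from $\supp(\calp)$ that chain together to give $v:\delta\rightarrow u:\mu$, putting us in case (ii).

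The main technical obstacle is carefully justifying the Perron--Frobenius criterion for divergence of $\sum_r (A^r)_{\delta,\gamma}$ and matching its critical boundary with the $d^2$ exceptional constants of Lemma \ref{l231}; both are classical and naturally handled together with the proof of that lemma. With the matrix framework in place, the remainder is an elementary reachability observation in the finite digraph $\Gamma$.
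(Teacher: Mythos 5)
Your argument is correct in outline, but it takes a genuinely different route from the paper. The paper never introduces the mean matrix or Perron--Frobenius theory: it argues by contradiction using only the black-box Lemmas \ref{l232}(a), \ref{l232}(b) and \ref{local}. Assuming both (i) and (ii) fail, it fixes the supercritical pair $F_{\d,\g}$, brings in the third value $\a$ (the lemma is stated for $d=3$, hence the $9=3^2$ exceptional constants), and splits into two cases according to whether $\d$ can reach $\a$ through $\supp(\calp)$; in each case it exposes $F_{\d,\g}(v)$ in stages -- a subcritical $F_{\d,\d}$-piece, a bounded set of crossing constraints via Lemma \ref{local}, then subcritical $F_{\a,\a}$- and $F_{\g,\g}$-pieces -- to conclude $|F_{\d,\g}(v)|$ is bounded with probability $1-\z$ for every $\z$, contradicting Lemma \ref{l232}(b). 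Your approach instead proves a stronger structural characterization: supercriticality of $F_{\d,\g}$ is equivalent to some strongly connected component with Perron value greater than $1$ lying on a $\d$-to-$\g$ walk in the digraph of direct implications, after which the lemma is an easy reachability observation. What your route buys is generality (it works for any $d$, and the conclusion that the relevant $\m$ can be taken inside a supercritical component is cleaner) and it avoids the case analysis tied to there being exactly three values. What it costs is that the equivalence between supercriticality and divergence of $\sum_r (A^r)_{\d,\g}$, and the matching of the exceptional constants with the values of $c$ where some component has Perron value exactly $1$, are not among the paper's stated tools -- they amount to reproving and strengthening Lemma \ref{l231} (in particular the direction ``divergence implies expected size $\Theta(n)$'' needs the standard supercritical multitype exploration argument). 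You flag this honestly, and it is the same kind of ``standard branching-process'' material the paper itself omits, but be aware that the paper's own proof is self-contained given its stated lemmas, whereas yours requires that extra layer to be written out.
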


{\bf Proof:}  If $F_{\d,\d}$ is supercritical then (i) holds;
so assume otherwise.  Thus there is some $\g\neq\d$ such that $F_{\d,\g}$
is supercritical. Thus there is a sequence of constraints in $\supp(\calp)$
through which $v:\d\rightarrow u:\g$.  So if $F_{\g,\g}$ is supercritical
then (ii) holds with $\m=\g$; so assume otherwise.

Let $\a$ be the third value. Consider any vertex $v$ and any constant $\z>0$.

{\em Case 1: there is no sequence of constraints in $\supp(\calp)$
through which $v:\d\rightarrow u:\a$.}
 Thus $F_{\d,\a}(v)=\emptyset$.
By Lemma \ref{l232}(a) there is some constant $L_1$ such
that with probability at least $1-\z/3$, $|F_{\d,\d}(v)|\leq L_1$.  If that bound holds,
then by Lemma \ref{local}, there is a constant $L_2$ such that
with probability at least $1-\z/3$, the set of variables $u$ such that
there is some $w\in F_{\d,\d}(v)$ and some constraint implying
$w:\d\rightarrow u:\g$ has size at most $L_2$; call the set of such variables $X$.
If that second bound holds,
then applying Lemma \ref{l232}(a) again, there is a constant $L_3$ such that
with probability at least $1-\z/3$, $|\cup_{x\in X}F_{\g,\g}(x)|<L_3$.
Therefore, with probability at least $1-\z$, $|F_{\d,\g}(v)|<L_3$; since this
holds for every $\z>0$, this contradicts Lemma \ref{l232}(b) and the fact that
$F_{\d,\g}$ is supercritical.

{\em Case 2: there is a sequence of constraints in $\supp(\calp)$
through which $v:\d\rightarrow u:\a$.} Then the same argument that showed
$F_{\g,\g}$ is subcritical shows that $F_{\a,\a}$ is also subcritical.
We proceed as in Case 1:

By Lemma \ref{l232}(a) there is some constant $L_1$ such
that with probability at least $1-\z/5$, $|X|\leq L_1$.
If that bound holds, then by Lemma \ref{local}, there is a constant $L_2$ such that
with probability at least $1-\z/5$, the set of variables $u$ such that
there is some $w\in F_{\d,\d}(v)$ and some constraint implying
$w:\d\rightarrow u:\a$ has size at most $L_2$; call the set of such variables $X_1$.
If that second bound holds,
then applying Lemma \ref{l232}(a) again, there is a constant $L_3$ such that
with probability at least $1-\z/5$, $|\cup_{x\in X_1}F_{\a,\a}(x)|<L_3$.
If that bound holds, then again by Lemma \ref{local}, there is a constant $L_4$ such that
with probability at least $1-\z/5$, the set of variables $u$ such that either (i)
there is some $w\in F_{\d,\d}(v)$
and some constraint implying $w:\d\rightarrow u:\g$
or (ii) there is some $w\in\cup\left( \cup_{x\in X_1}F_{\a,\a}(x)\right)$
and some constraint implying $w:\a\rightarrow u:\g$
has size at most $L_4$; call the set of such variables $X_2$.
Finally, if all those bounds hold,
then applying Lemma \ref{l232}(a) again, there is a constant $L_5$ such that
with probability at least $1-\z/5$, $|\cup_{x\in X_2}F_{\g,\g}(x)|<L_5$.
Therefore, with probability at least $1-\z$, $|F_{\d,\g}(v)|<L_5$; since this
holds for every $\z>0$, this contradicts Lemma \ref{l232}(b) and the fact that
$F_{\d,\g}$ is supercritical.
\proofend

We are now ready to prove our theorem.

{\bf Proof of Theorem \ref{t23}:}
Suppose that $\csph$ has a coarse threshold and consider
$M,\e,\a,p=p(n)$ from Corollary~\ref{toolcsp}. If $p=(c+o(1))/n$ where
$c$ is one of the 9 exceptional constants from Lemma \ref{l231}, then
we can increase $p$ by some small $\e'/n$, and decrease $\e$ slightly, so that
the conditions of Corollary~\ref{toolcsp} still hold.  This allows us
to apply Lemmas \ref{l231}, \ref{l233}.

As defined in \cite{mm1}, a value $1\leq \d\leq 3$ is {\em bad} if there
is a constraint in $\supp(\calp)$ which forbids a variable from receiving $\d$; i.e. if
there is a constraint $C$ which contains the restrictions $(\d,1),(\d,2),(\d,3)$
or the restrictions $(1,\d),(2,\d),(3,\d)$. A value $\d$ is also said to be
bad if there is a sequence of constraints in $\supp(\calp)$ joining variables $u,v$ for which
$v:\d\rightarrow u:\g$ where $\g$ is a bad value. It is easy to see that if there is a unicyclic
CSP $M$ formed from the constraints of $\supp(\calp)$ such that every satisfying
assignment to $M$ uses at least one bad value, then $M$ can be modified to an unsatisfiable unicyclic
CSP $M$' formed from $\supp(\calp)$: we simply attach paths of constraints to
each variable of $M$ which forbid those variables from receiving any bad
values. (See \cite{mm1} for the details.) Therefore, if every unicyclic CSP formed
from $\supp(\calp)$ is satisfiable, then every such CSP can be satisfied without using any
bad values. Thus, $M$ can be satisfied without using any bad values.

{\em Case 1: There is a value $\d$ such that (i)
every satisfying assignment of $M$ must use $\d$ or a bad value
on at least one variable and (ii)
$F_{\d}$ is supercritical.}

If there are any bad values, then the above construction produces a
unicyclic $M'\supset M$ such that every satisfying assignment of $M'$
must use $\d$ on at least one variable. (Otherwise, set $M'=M$.)
If $F_{\d,\d}$ is supercritical
then $M',\d$ satisfy Theorem \ref{t23}.  Otherwise, by Lemma \ref{l233},
there is a value $\m\neq\d$ such that $F_{\m,\m}$
is supercritical and there is a sequence of constraints so that $v:\d\rightarrow u:\m$.
Attaching that sequence to every variable of $M'$ yields a unicyclic CSP $M''$
for which every satisfying assignment must use $\m$ on at least one variable.
Thus  $M'',\m$ satisfy Theorem \ref{t23}.

{\em Case 2: There is a satisfying assignment $A$ of $M$
in which every value $\d$ used is such that $\d$ is not bad and
$F_{\d}$ is not supercritical.}   Suppose
that $M$ has $r$ variables $x_1,...,x_r$ and that $A$ assigns $a_i$ to $x_i$.
Recall from Section \ref{sdkt} that $\csp\oplus A$ is formed by taking
$\csp$ and then choosing $r$ random variables $v_1,...,v_i$ and adding
one-variable constraints that force $v_i$ to take $a_i$.  Clearly
$\pr(\csp\oplus A\mbox{ is unsatisfiable})\geq\pr(\csp\oplus M\mbox{ is unsatisfiable})$.

Expose $F=\cup_{i=1}^rF_{a_i}(v_i)$, and $U$, the set of variables
outside of $F$ that lie in a constraint with a variable in $F$.
Since all of the $F_{a_i}$ are subcritical, Lemmas \ref{local} and \ref{l232}(a)
imply that there is some $L$ such that with probability at least $1-\a/2$,
$|U|<L$ and $F\cup U$ is a forest with $r$ trees, one containing each $a_i$.
Since adding $M$ to $\csph$ increases
the probability of unsatisfiability by at least $\a$, it must be that the probability
that $\csph$ is satisfiable, $|U|\leq L$, $\F\cup U$ is such a forest
and $\csph\oplus A$ is unsatisfiable
is at least $\a/2$.

Suppose that $\csph$ is satisfiable, $|U|\leq L$ and $F\cup L$ is a
forest of $r$ treees, one for each $a_i$. Note that the forest
structure of $F\cup U$ implies that every variable whose value
is determined by the assignment $A$ lies in $F$.  Consider some $u\in U$
sharing a constraint with $w\in F$ where $A$ forces $w$ to take the value $\m$.
Let $\Omega=\Omega(u)$ be the set of values which can be assigned to $u$ which,
in conjunction with assigning $\m$ to $w$ do not violate their constraint.
We know that $|\Omega|\neq0$ since otherwise $\m$ is a bad value and hence
some $a_i$ is a bad value.
We know that $|\Omega|\neq1$ since otherwise $u\in F$. So
$|\Omega(u)|\geq 2$ for each $u\in U$. Suppose that $u_1,...,u_\ell$
are the variables in $U$ with $|\Omega|=2$,
and let $\d_i$ be the value not in $\Omega(u_i)$. Consider taking
a random CSP formed as follows:  first take a $\csph$ and then
choose $\ell$ random variables $u_1,...,u_{\ell}$ and force $u_i$ to not
take value $\d_i$ using a one-variable constraint.
We have proved that the one-variable constraints
boost the probability of unsatisfiability by at least $\a/2$.

We have now reached a stage where the rest of the proof is by now
standard. We can use the techniques in any of \cite{ef,af,fk} to
show that if $\ell$ 1-variable constraints, each of the form ``$v_i$
cannot receive $\d_i$" boost the probability of unsatisfiability by
at least $\a/2$, then so does the addition of some constant number
of additional random constraints. This will lead to a contradicton
of Corollary \ref{toolcsp}(c).  We will take the most concise of
these techniques, the one from \cite{fk} (which was proposed by
Alon). The main tool is a variant of a theorem of Erd\H{o}s and
Simonovits \cite{es},  as stated in \cite{ef3}:

\begin{lemma}\label{les} For all positive integers $k,\ell$ and real $0<\g\leq 1$,
there exists $\g'>0$ such that for sufficiently large $n$, if $H\subseteq [n]^{\ell}$
is such that $|H|\geq\g n^{\ell}$ then with probability at least $\g'$ a random choice
of $\ell$ $k$-tuples of integers between 1 and $n$: $(v_1^1,...,v^k_1),...,
(v_{\ell}^1,...,v_{\ell}^k)$ yields a complete $\ell$-partite system of elements of $H$; i.e. for
every function $f:[\ell]\rightarrow[k]$, the $\ell$-tuple
$(v_1^{f(1)},...,v_{\ell}^{f(\ell)})\in H$.
\end{lemma}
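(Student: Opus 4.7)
The plan is to prove the quantitative bound $\g' = \g^{k^\ell}$, namely that if the $k\ell$ coordinates $v_i^j \in [n]$ are chosen independently and uniformly, then
\[
T(H) \; := \; \ex\left[\prod_{f\colon [\ell]\to [k]} \mathbf{1}_H\bigl(v_1^{f(1)},\ldots,v_\ell^{f(\ell)}\bigr)\right] \; \geq \; \bigl(|H|/n^\ell\bigr)^{k^\ell} \; \geq \; \g^{k^\ell}.
\]
This is precisely the desired conclusion, since $T(H)$ equals the probability that the random tuples form a complete $\ell$-partite system of elements of $H$. I would establish the inequality $T(H) \geq (|H|/n^\ell)^{k^\ell}$ by induction on $\ell$; the base case $\ell = 1$ follows from independence of $v_1^1,\ldots,v_1^k$, which gives $T(H) = (|H|/n)^k$ outright.

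For the inductive step, condition on $v_2^1,\ldots,v_\ell^k$ and group the $k^\ell$ factors of the product by the restriction $g := f|_{\{2,\ldots,\ell\}}\colon\{2,\ldots,\ell\}\to[k]$. For each of the $k^{\ell-1}$ possible $g$, the $k$ factors sharing it depend on the free coordinate only through membership in the slice $S_g := \{x \in [n] : (x,v_2^{g(2)},\ldots,v_\ell^{g(\ell)}) \in H\}$, so by independence of $v_1^1,\ldots,v_1^k$ the conditional expectation equals $\psi^k$, where $\psi := \bigl|\bigcap_g S_g\bigr|\big/n$. One application of Jensen's inequality to the outer expectation gives $T(H) \geq (\ex[\psi])^k$, and the identity $\ex[\psi] = \ex_x[T(H_x)]$ --- where $H_x \subseteq [n]^{\ell-1}$ is the slice of $H$ at first coordinate $x$ --- lets the inductive hypothesis enter: $\ex_x\bigl[T(H_x)\bigr] \geq \ex_x\bigl[(|H_x|/n^{\ell-1})^{k^{\ell-1}}\bigr] \geq \bigl(\ex_x[|H_x|/n^{\ell-1}]\bigr)^{k^{\ell-1}} = \g^{k^{\ell-1}}$, by a second Jensen in $x$. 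Combining yields $T(H) \geq \g^{k^\ell}$.

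The only genuinely delicate point is the factorization at the heart of the inductive step: one must verify that after conditioning on $v_2^\cdot,\ldots,v_\ell^\cdot$, the $k^\ell$ indicators in $T(H)$ really do regroup as $k^{\ell-1}$ independent $k$-th powers of a single event, so that one Jensen application genuinely multiplies the exponent by $k$. Once this bookkeeping is in place, the two Jensen applications per inductive level combine to give the exponent $k \cdot k^{\ell-1} = k^\ell$, and we may take $\g' = \g^{k^\ell}$, which is a positive constant independent of $n$. This is the standard iterated Cauchy--Schwarz / tensor-power argument that underlies the hypergraph supersaturation inequality of Erd\H{o}s--Simonovits, which is why the authors cite it as an off-the-shelf tool rather than reprove it.
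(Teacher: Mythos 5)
Your proof is correct, and it is worth pointing out that the paper itself gives no proof of this lemma: it is invoked as an off-the-shelf tool, quoted from \cite{ef3} as a variant of the Erd\H{o}s--Simonovits theorem \cite{es}. So where the paper supplies a citation, you supply a self-contained argument, and it is the standard tensor-power/Jensen one that underlies those sources. The key regrouping is right: conditioning on the blocks $v_2^{\cdot},\ldots,v_\ell^{\cdot}$, the $k^\ell$ indicators collapse to $\prod_{j=1}^{k}\mathbf{1}\bigl[v_1^j\in\bigcap_g S_g\bigr]$, so the conditional probability is $\psi^k$ with $\psi=\bigl|\bigcap_g S_g\bigr|/n$; the identity $\mathbf{E}[\psi]=\mathbf{E}_x[T(H_x)]$ holds by averaging over a uniform first coordinate $x$; and both Jensen applications go the right way since $t\mapsto t^{m}$ is convex on $[0,1]$. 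This yields the explicit constant $\gamma'=\gamma^{k^\ell}$, and in fact needs no ``sufficiently large $n$'' when all $k\ell$ coordinates are i.i.d.\ uniform; if instead each $k$-tuple must consist of distinct integers (as in the paper's application, where the tuples are the variable sets of added constraints), the two sampling models differ by $O(1/n)$ in total variation, so $\gamma'=\gamma^{k^\ell}/2$ works for large $n$, which is exactly the slack the statement allows. Two small touch-ups: the statement you should carry through the induction is $T(H)\geq(|H|/n^\ell)^{k^\ell}$ for \emph{every} $H\subseteq[n]^\ell$ (you use it for the slices $H_x$, which need not be dense), and the final step of your chain should read ``$\geq\gamma^{k^{\ell-1}}$'' rather than ``$=\gamma^{k^{\ell-1}}$'', since $\mathbf{E}_x\bigl[|H_x|/n^{\ell-1}\bigr]=|H|/n^\ell\geq\gamma$ rather than equality.
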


To apply this lemma, we set $k=2$ and we
let $H$ be the set of {\em bad} $\ell$-tuples of
variables in $\csp$; i.e. those $\ell$-tuples $v_1,...,v_{\ell}$ such that if
we forbid each $v_1$ from receiving $\d_i$ then the CSP will be unsatisfiable.
We have shown that choosing a random $\ell$-tuple $v_1,...,v_{\ell}$ and
forbidding each $v_i$ from receiving $\d_i$ boosts the probability of unsatisfiability
by at least $\a/2$.  That random choice will only make the CSP unsatisfiable if
we choose a bad $t$-tuple of variables.  Therefore, $|H|\geq (\a/2) n^{\ell}$,
and so Lemma \ref{les} applies to $H$.

Now suppose that instead of adding those $\ell$ 1-variable constraints
to $\csp$, we instead add $\ell$ new random constraints selected according to $\calp$;
call these constraints $C_1,...,C_{\ell}$.
For each value $\d$, there is at least one constraint $C_{\d}\in\supp(\calp)$
which does not allow both variables to recieve $\d$; otherwise $\csp$ would
be trivially satisfiable by setting every variable equal to $\d$.
The probability that for each $1\leq i\leq \ell$, $C_i=C_{\d_i}$ is
$\prod_{i=1}^{\ell}\calp(C_{\d_i})=\z>0$.  If this event occurs,
then we can treat each $C_i$ as a pair (i.e. 2-tuple) of variables
at least one of which cannot take the value $\d_i$.  By Lemma \ref{les},
with probability at least some $\g'>0$ this collection of $\ell$ pairs
forms a complete $\ell$-partite system of elements of $H$.  If so, then
the resulting CSP is unsatisfiable: To see this, consider any satisfying
assignment and for each $1\leq i\leq \ell$, set $\phi(i)$ to be a variable of
$C_i$ which does not have the value $\d_i$.  Then the $\ell$-tuple
$(\phi(1),...,\phi(\ell))$ is a member of $H$ and hence is bad.
Thus there is no satisfying assignment in which each $\phi(i)$ does
not receive $\d_i$.

Therefore, adding $\ell$ random constraints increases the probability
of unsatisfiability by at least $\z\g'>0$.  So adding a sufficiently
large constant number of additional random constraints will boost it by
arbitarily close to 1.  Increasing $p$ by $\e/n$ will a.s. result in
at least that many extra constraints.  So this contradicts Corollary
\ref{toolcsp}(c). This establishes Claim 4 and hence our Lemma.
\proofend

We close this section by noting why this proof cannot be extended to
general $d$.  The problem is that possibly some of the variables in
$U$ would have their domain sizes reduced by two instead of one and
so some of the 1-variable constraints would be of the form ``$v_i$
cannot receive $\d_i$ or $\g_i$''. This would prevent us from using
Lemma \ref{les} and any of the other known techniques for
establishing sharp thresholds.

\section{Future Directions}
There is clearly much work still to be done along these lines of research.
The big problem still remains - determine precisely
which models from \cite{mm1} have a sharp threshold.
Of course, Section 2 indicates that this may be overly ambitious.
In the example of Section 2, $\supp(\calp)$ is {\em disconnected}
in that the values can be partitioned into two parts (namely $\{1,2,3\}$ and $\{4,5\}$
such that no constraint permits its variables to take members of different parts.
In \cite{mm2}, it was noted that when $\supp(\calp)$ is disconnected $\csp$ can
behave strangely.  So perhaps it is more feasible to determine precisely which
models with $\supp(\calp)$ connected have sharp thresholds.
An important subgoal would
be to do this for binary CSP's, i.e. the case where $k=2$.
Another reasonable goal to pursue would be the $d=3$ case.

As far as more specific classes of models go, one should try to
extend the work in Section \ref{sh} and examine
whether Hypothesis A holds for $H$-homomorphism problems
when $H$ is a {\em directed} hypergraph.  Such homomorphism
problems are equivalent to CSP's in which every constraint
is identical under some permutation of the variables.
Of course, we showed in Section \ref{sdg} that this is not always true
in the $\csph$ model.  But there is a chance that it is true
for the $\csp$ model.  Also, the example in Section \ref{sdg} is not connected.
So perhaps Hypothesis A holds for
$H$-homomorphism problems whenever $H$ is a connected directed hypergraph.
Or perhaps one needs to require that $H$ is strongly connected.
And of course,
it would be good to determine whether the ``connected'' condition can
be removed from Theorem \ref{thomo} by answering Question \ref{q1}.

\bibliographystyle{plain}
\bibliography{csp2}

\begin{thebibliography}{10}

\bibitem{abm}
Dimitris Achlioptas, Paul Beame, and Michael Molloy.
\newblock A sharp threshold in proof complexity.
\newblock In {\em STOC '01: Proceedings of the 33rd Annual ACM Symposium on
  Theory of Computing}, pages 337--346 (electronic), New York, 2001. ACM.

\bibitem{nae}
Dimitris Achlioptas, Arthur Chtcherba, Gabriel Istrate, and Cristopher Moore.
\newblock The phase transition in {1-in-$k$} {SAT} and {NAE 3-SAT}.
\newblock In {\em SODA '01: Proceedings of the twelfth annual ACM-SIAM
  symposium on Discrete algorithms}, pages 721--722, Philadelphia, PA, USA,
  2001. Society for Industrial and Applied Mathematics.

\bibitem{af}
Dimitris Achlioptas and Ehud Friedgut.
\newblock A sharp threshold for {$k$}-colorability.
\newblock {\em Random Structures Algorithms}, 14(1):63--70, 1999.

\bibitem{akk}
Dimitris Achlioptas, Michael S.~O. Molloy, Lefteris~M. Kirousis, Yannis~C.
  Stamatiou, Evangelos Kranakis, and Danny Krizanc.
\newblock Random constraint satisfaction: A more accurate picture.
\newblock {\em Constraints}, 6(4):329--344, 2001.

\bibitem{ac6}
Dimitris Achlioptas and Cristopher Moore.
\newblock The asymptotic order of the random {k -SAT} threshold.
\newblock In {\em FOCS '02: Proceedings of the 43rd Symposium on Foundations of
  Computer Science}, pages 779--788, Washington, DC, USA, 2002. IEEE Computer
  Society.

\bibitem{cr}
Va{\v{s}}ek Chv{\'a}tal and Bruce Reed.
\newblock Mick gets some (the odds are on his side).
\newblock In {\em FOCS '92: Proceedings of the 33rd Symposium on Foundations of
  Computer Science}, pages 620--627, Washington, DC, USA, 1992. IEEE Computer
  Society.

\bibitem{cs}
Va{\v{s}}ek Chv{\'a}tal and Endre Szemer{\'e}di.
\newblock Many hard examples for resolution.
\newblock {\em J. Assoc. Comput. Mach.}, 35(4):759--768, 1988.

\bibitem{cm}
Harold Connamacher and Michael Molloy.
\newblock The exact satisfiability threshold for a potentially intractible
  random constraint satisfaction problem.
\newblock In {\em FOCS '04: Proceedings of the 45th Annual IEEE Symposium on
  Foundations of Computer Science (FOCS'04)}, pages 590--599, Washington, DC,
  USA, 2004. IEEE Computer Society.

\bibitem{cd}
Nadia Creignou and Herv{\'e} Daud{\'e}.
\newblock Generalized satisfiability problems: minimal elements and phase
  transitions.
\newblock {\em Theor. Comput. Sci.}, 302(1-3):417--430, 2003.

\bibitem{cd2}
Nadia Creignou and Herv{\'e} Daud{\'e}.
\newblock Combinatorial sharpness criterion and phase transition classification
  for random {CSP}s.
\newblock {\em Inform. and Comput.}, 190(2):220--238, 2004.

\bibitem{dm}
O.~{Dubois} and J.~{Mandler}.
\newblock {On the non-3-colourability of random graphs}.
\newblock {\em ArXiv Mathematics e-prints}, math/0209087, 2002.

\bibitem{dm2}
Olivier Dubois and Jacques Mandler.
\newblock The 3-{XORSAT} threshold.
\newblock In {\em FOCS '02: Proceedings of the 43rd Symposium on Foundations of
  Computer Science}, pages 769--778, Washington, DC, USA, 2002. IEEE Computer
  Society.

\bibitem{er}
P.~Erd{\H{o}}s and A.~R{\'e}nyi.
\newblock On random graphs. {I}.
\newblock {\em Publ. Math. Debrecen}, 6:290--297, 1959.

\bibitem{es}
Paul Erd{\H{o}}s and Mikl{\'o}s Simonovits.
\newblock Supersaturated graphs and hypergraphs.
\newblock {\em Combinatorica}, 3(2):181--192, 1983.

\bibitem{fla}
Abraham~D. Flaxman.
\newblock A sharp threshold for a random constraint satisfaction problem.
\newblock {\em Discrete Math.}, 285(1-3):301--305, 2004.

\bibitem{ef}
Ehud Friedgut.
\newblock Sharp thresholds of graph properties, and the {$k$}-sat problem.
\newblock {\em J. Amer. Math. Soc.}, 12(4):1017--1054, 1999.
\newblock With an appendix by Jean Bourgain.

\bibitem{ef3}
Ehud Friedgut.
\newblock Hunting for sharp thresholds.
\newblock {\em Random Structures Algorithms}, 26(1-2):37--51, 2005.

\bibitem{fk}
Ehud Friedgut and Michael Krivelevich.
\newblock Sharp thresholds for certain {R}amsey properties of random graphs.
\newblock {\em Random Structures Algorithms}, 17(1):1--19, 2000.

\bibitem{gmp}
Ian~P. Gent, Ewan Macintyre, Patrick Prosser, Barbara~M. Smith, and Toby Walsh.
\newblock Random constraint satisfaction: Flaws and structure.
\newblock {\em Constraints}, 6(4):345--372, 2001.

\bibitem{ag}
Andreas Goerdt.
\newblock A threshold for unsatisfiability.
\newblock {\em J. Comput. System Sci.}, 53(3):469--486, 1996.
\newblock 1994 ACM Symposium on Parallel Algorithms and Architectures (Cape
  May, NJ, 1994).

\bibitem{gkp}
Leslie~Ann Goldberg, Steven Kelk, and Mike Paterson.
\newblock The complexity of choosing an {$H$}-coloring (nearly) uniformly at
  random.
\newblock {\em SIAM J. Comput.}, 33(2):416--432, 2004.

\bibitem{htr}
Hamed Hatami.
\newblock Technical report.
\newblock in prepration.

\bibitem{hn2}
Pavol Hell and Jaroslav Ne{\v{s}}et{\v{r}}il.
\newblock On the complexity of {$H$}-coloring.
\newblock {\em J. Combin. Theory Ser. B}, 48(1):92--110, 1990.

\bibitem{hn}
Pavol Hell and Jaroslav Ne{\v{s}}et{\v{r}}il.
\newblock {\em Graphs and homomorphisms}, volume~28 of {\em Oxford Lecture
  Series in Mathematics and its Applications}.
\newblock Oxford University Press, Oxford, 2004.

\bibitem{gi}
Gabriel Istrate.
\newblock Coarse and sharp thresholds of boolean constraint satisfaction
  problems.
\newblock {\em Discrete Applied Mathematics}, to appear.

\bibitem{jlr}
Svante Janson, Tomasz {\L}uczak, and Andrzej Rucinski.
\newblock {\em Random graphs}.
\newblock Wiley-Interscience Series in Discrete Mathematics and Optimization.
  Wiley-Interscience, New York, 2000.

\bibitem{jsv}
Svante Janson, Yannis~C. Stamatiou, and Malvina Vamvakari.
\newblock Bounding the unsatisfiability threshold of random 3-{SAT}.
\newblock {\em Random Structures Algorithms}, 17(2):103--116, 2000.

\bibitem{mit}
David~G. Mitchell.
\newblock Resolution complexity of random constraints.
\newblock In {\em CP '02: Proceedings of the 8th International Conference on
  Principles and Practice of Constraint Programming}, pages 295--309, London,
  UK, 2002. Springer-Verlag.

\bibitem{mm2}
Michael Molloy.
\newblock When does the giant component bring unsatisfiability?
\newblock {\em Combinatorica}, to appear.

\bibitem{mmsurv}
Michael Molloy.
\newblock Thresholds for colourability and satisfiability in random graphs and
  {B}oolean formulae.
\newblock In {\em Surveys in combinatorics, 2001 (Sussex)}, volume 288 of {\em
  London Math. Soc. Lecture Note Ser.}, pages 165--197. Cambridge Univ. Press,
  Cambridge, 2001.

\bibitem{mm1}
Michael Molloy.
\newblock Models and thresholds for random constraint satisfaction problems.
\newblock In {\em STOC '02: Proceedings of the thiry-fourth annual ACM
  symposium on Theory of computing}, pages 209--217, New York, NY, USA, 2002.
  ACM Press.

\bibitem{ms}
Michael Molloy and Mohammad~Reza Salavatipour.
\newblock The resolution complexity of random constraint satisfaction problems.
\newblock {\em SIAM J. Comp.}, to appear.
\newblock Preliminary version in Proceedings of FOCS 2003.

\bibitem{Raspaud}
Andr{\'e} Raspaud and Eric Sopena.
\newblock Good and semi-strong colorings of oriented planar graphs.
\newblock {\em Inform. Process. Lett.}, 51(4):171--174, 1994.

\bibitem{xl1}
Ke~Xu and Wei Li.
\newblock Exact phase transitions in random constraint satisfaction problems.
\newblock {\em J. Artificial Intelligence Res.}, 12:93--103 (electronic), 2000.

\end{thebibliography}
\end{document}